\newcommand\aronly[1]{#1}
\newcommand\jonly[1]{}
\newcommand\algor[1]{}
\newcommand\invadraw[1]{}
\def\lk{\mbox{lk}}
\def\Cl{\mbox{Cl}}
\DeclareMathOperator{\pro}{pr}
\DeclareMathOperator{\inc}{\lambda}
\DeclareMathOperator{\incj}{\mu}
\def\Cyl{\mathop{\fam0 Cyl}}
\def\ret{\mathop{\fam0 ret}}
\def\id{\mathop{\fam0 id}}
\def\sgn{\mathop{\fam0 sgn}}
\def\lk{\mathop{\fam0 lk}}
\def\Cl{\mathop{\fam0 Cl}}
\def\R{{\mathbb R}} \def\Z{{\mathbb Z}} \def\C{\Bbb C} 
\let\Bbb=\mathbb
\long\def\comment#1\endcomment{}
\theoremstyle{plain}
\newtheorem{theorem}{Theorem}[section]
\newtheorem{lemma}[theorem]{Lemma}
\newtheorem{proposition}[theorem]{Proposition}
\newtheorem{conjecture}[theorem]{Conjecture}
\newtheorem{example}[theorem]{Example}
\theoremstyle{definition}
\newtheorem{remark}[theorem]{Remark}
\begin{document}

%https://www.ams.org/publications/journals/journalsframework/procedit

%https://www.intlpress.com/site/pub/pages/journals/items/hha/_home/editorial/index.php

%European Journal of Mathematics https://www.springer.com/journal/40879/editors
%Siberian Mathematical Journal https://www.springer.com/journal/11202/editors

%Algorithmica https://www.springer.com/journal/453/editors

%https://www.journals.elsevier.com/topology-and-its-applications

\title{Extendability of simplicial maps is undecidable}

\author{A. Skopenkov}

\thanks{I would like to thank M. \v Cadek, R. Karasev, \aronly{E. Kogan, B. Poonen,} L. Vok\v r\'inek, U. Wagner, and the anonymous referee for helpful discussions.
\newline
Moscow Institute of Physics and Technology,
and Independent University of Moscow.
Email: \texttt{skopenko@mccme.ru}.
\texttt{https://users.mccme.ru/skopenko/}.
\aronly{Supported by the Russian Foundation for Basic Research Grant No. 19-01-00169.}}

\date{}

\maketitle

\begin{abstract}
We present a short proof of the \v{C}adek-Kr\v{c}\'al-Matou\v{s}ek-Vok\v{r}\'inek-Wagner result from the title (in the following form due to Filakovsk\'y-Wagner-Zhechev).

{\it For any fixed even $l$ there is no algorithm recognizing the extendability of the identity map of $S^l$ to a PL map $X\to S^l$
of given $2l$-dimensional simplicial complex $X$ containing a subdivision of $S^l$ as a given subcomplex.}

We also exhibit a gap in the Filakovsk\'y-Wagner-Zhechev proof that embeddability of complexes is undecidable in codimension $>1$.
\end{abstract}

\aronly{\tableofcontents}

\comment

55-02, 55P05, 55S36, 68-02, 68U05

%Algorithmic aspects of homeomorphism problems A. Nabutovsky, S. Weinberger
%Theorem 1: Every PL or smooth simply connected manifold M^n of dimension n at least 5 can be recognized
%among simply connected manifolds. That is, there is an algorithm to decide whether or not another
%simply connected manifold is Top, PL or Diff isomorphic to M.
%Moreover, an analogous statement is true for embeddings in codimension at least three: one can
%algorithmically recognize any given embedding of one simply connected manifold in another up to isomorphism
%of pairs, or up to isotopy, if the codimension of the embedding is not two.
%arXiv:math/9707232

%Dear Marek,
%I'm reading your very interesting paper arXiv:1302.2370v1 on algorithmic unsolvability of continuous
%maps extendability. I'm interested in the problem of unsolvability of isotopy of two embeddings.
%It seems to be very related to the topic. However, as far as I understand, your proof can't be directly
%applied to isotopy problem. Do you know about such extentions of your proof and if so, could you,
%please, refer me to some papers?
%Best wishes,
%Fedorov Mikhail

Хорошо известно, что существует алгоритм распознавания планарности графа, линейный по количеству его вершин
%быстрый (точнее ---
%, определяющий, вложим ли граф в плоскость, т.е., можно ли граф расположить на плоскости так, чтобы его
%ребра не пересекались и не самопересекались
(Хопкрофт-Тарджан, 1974).
Мы рассмотрим аналогичную задачу для гиперграфов в пространствах произвольной размерности:
как распознать  вложимость  $k$-мерного (т.е. $(k+1)$-однородного) гиперграфа в $d$-мерное пространство?

Для коразмерности $d-k=0,1$ указанная в названии нераспознаваемость  несложно вытекает из теоремы Новикова о нераспознаваемости $k$-мерной сферы для $k>4$ (это показали Matou\v sek-Tancer-Wagner, arXiv:0807.0336 [cs.CG]).
Я расскажу о соответствующем результате для коразмерности $d-k>2$, который  анонсирован в 2019 (Filakovsk\'y-Wagner-Zhechev). Его красивое доказательство использует алгоритмическую нераспознаваемость продолжаемости отображений (\v Cadek-Kr\v cal-Matou\v sek- Vokrinek-Wagner, arXiv:1302.2370 [cs.CG]).
Я приведу простой вывод последнего из нераспознаваемости разрешимости диофантовых уравнений и топологической теоремы Брауэра-Хопфа-Уайтхеда.

\endcomment

\section{Extendability of simplicial maps is undecidable}\label{s:mainre}

We present short proofs of recent topological undecidability results for hypergraphs (complexes): Theorems \ref{t:hopfwhs} and \ref{t:exun} \cite{CKM+, FWZ}.

%\footnote{This paper is based on the courses \cite{HT}. In these courses topological concepts are  exposed in
%the way interesting and accessible to non-specialists, in particular, to computer science students.
%Besides this note, the courses are based on some sections of \cite{Sk20, Sk}.}

%\input{algor_dhyco}

A {\bf complex} $K=(V,F)$ is a finite set $V$ together with a collection $F$ of subsets of $V$ such that if a subset $\sigma$ is in $F$, then every subset of $\sigma$ is in $F$.\footnote{We do not use longer name `abstract finite simplicial complex'.
A {\it $k$-hypergraph} (more precisely, a $(k+1)$-uniform hypergraph) $(V,F)$ is a finite set $V$ together with a collection $F$ of $(k+1)$-element subsets of $V$.
In topology it is more traditional (because often more convenient) to work with complexes not hypergraphs.
The following results are stated for complexes, although some of them are correct for hypergraphs.}
(Hence $F\ni\emptyset$.)
In an equivalent geometric language, a complex is a collection of closed faces (=subsimplices) of some simplex.
A {\bf $k$-complex} is a complex containing at most $(k+1)$-element subsets, i.e., at most $k$-dimensional simplices.
Elements of $V$ and of $F$ are called {\bf vertices} and {\bf faces}.

{\it The complete $k$-complex on $n$ vertices} (or the $k$-skeleton of the $(n-1)$-simplex)
%$\Delta_{n-1}^k:=([n],{[n]\choose \le k+1})$
is the collection of all at most $(k+1)$-element subsets of an $n$-element set.
For $k=0$ we denote this complex by $[n]$, for $n=k+1$ by $D^k$ ($k$-simplex or $k$-disk), and for $n=k+2$ by $S^k$ ($k$-sphere).

\begin{figure}[h]\centering
\includegraphics[scale=0.9]{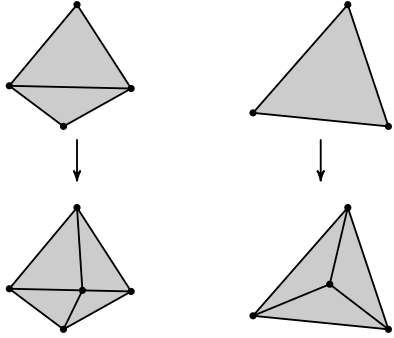}
\caption{Subdivision of an edge (left) and of a face (right)}
\label{podra1}
\end{figure}

The {\it subdivision of an edge} operation is shown in fig.~\ref{podra1}. 
Exercise: represent
the {\it subdivision of a face} operation shown in fig.~\ref{podra1} as composition of several subdivisions of an edge and inverse operations).
A {\bf subdivision} of a complex $K$ is any complex 
%that can be 
obtained from $K$ by several subdivisions of edges.

A {\bf simplicial map}   $f:(V,F)\to(V',F')$ between complexes is a map $f:V\to V'$ (not necessarily injective)
such that $f(\sigma)\in F'$ for each $\sigma\in F$.
A {\bf piecewise-linear (PL) map} $K\to K'$ between complexes is a simplicial map between certain their subdivisions.

The {\bf body} (or geometric realization) $|K|$ of a complex $K$ is the union of simplices of $K$.
Below we often abbreviate $|K|$ to $K$; no confusion should arise.
A  simplicial or PL map between complexes induces a map between their bodies, which is called {\it simplicial} or {\it PL}, respectively.\footnote{The related different notion of a {\it continuous} map between bodies of complexes is not required to state and prove the results of this text.
%can be defined in the usual way.
%$|K|\to\R^d$ and continuous maps $|K|\to S^m$
In theorems below the existence of a continuous extension is equivalent to the existence of a PL extension (by the PL  Approximation Theorem).}
%So we just write `map' in the statements (meaning `PL map' or `continuous map').

The {\bf wedge} $K_1\vee\ldots\vee K_m$ of complexes $K_1=(V_1,F_1),\ldots,K_m=(V_m,F_m)$ with disjoint vertices is the complex whose vertex set is obtained by choosing one vertex from each $V_j$ and identifying chosen vertices, and whose of faces is obtained from $F_1\sqcup\ldots\sqcup F_m$ by such identification.
The choice of vertices is important in general, but is immaterial in the examples below.
Let $K\vee K$ be the wedge of two copies of $K$.

Let $Y_l=S^l$ for $l$ even and $Y_l=S^l\vee S^l$ for $l$ odd.

\begin{theorem}[retractability is undecidable]\label{t:hopfwhs}
For any fixed integer $l>1$ there is no algorithm recognizing the extendability of the identity map of $Y_l$
to a PL map $X\to Y_l$ of given $2l$-complex $X$ containing a subdivision of $Y_l$ as a given subcomplex.
\end{theorem}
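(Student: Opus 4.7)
The plan is to reduce from Matiyasevich's theorem: solvability of systems of polynomial Diophantine equations over $\Z$ is undecidable. By introducing auxiliary variables $y_{ij}:=x_ix_j$, it suffices to reduce from systems of quadratic equations $\sum_{i,j}a^{(k)}_{ij}x_ix_j+\sum_i b^{(k)}_i x_i=c_k$, $1\le k\le m$, in integer unknowns $x_1,\dots,x_n$.

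Given such a system, I would build $X$ from $Y_l$ in two stages. First, wedge $n$ additional copies of $S^l$ onto $Y_l$ at the basepoint. A PL retraction $X\to Y_l$ restricts on the $j$-th extra sphere to a homotopy class $\xi_j\in\pi_l(Y_l)$, which for $l$ even is an integer (the degree) and for $l$ odd is a pair of integers (the bidegree into $S^l\vee S^l$); these serve as the integer parameters of the extension. Second, attach $m$ copies of $D^{2l}$ along carefully chosen PL $(2l-1)$-spheres $\alpha_k$ in the $l$-skeleton; the retraction extends over the $k$-th cell iff $[\alpha_k]\in\pi_{2l-1}(Y_l)$ is null after the $\xi_j$'s are substituted.

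The crux is the choice of the $\alpha_k$'s. By the Hilton--Milnor theorem, $\pi_{2l-1}$ of a wedge of $l$-spheres contains a free $\Z$-summand spanned by the pairwise Whitehead products $[\iota_i,\iota_j]$, and bilinearity of the Whitehead product translates a $\Z$-linear combination of these brackets into the desired quadratic polynomial in the $\xi_j$. For $l$ even this is detected by the Hopf invariant $H\colon\pi_{2l-1}(S^l)\to\Z$ via the classical formula $H([f,g])=\pm2\deg(f)\deg(g)$. For $l$ odd, since $\pi_{2l-1}(S^l)$ is torsion, one instead uses the Hilton coordinate of the $\Z$-summand of $\pi_{2l-1}(S^l\vee S^l)$ generated by the bracket of the two factor inclusions of $Y_l$.

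Two technical obstacles remain and constitute the hard part. Firstly, $\pi_{2l-1}(Y_l)$ has torsion, so vanishing of the integer invariant is only equivalent to vanishing of the class after multiplying by the (fixed, since $l$ is fixed) exponent $N_l$ of the torsion subgroup; I would use $N_l\alpha_k$ in place of $\alpha_k$. Secondly, for $l$ odd the bilinear invariant coming from $[\iota_i,\iota_j]$ is antisymmetric, of the form $a_ib_j-a_jb_i$ in the two-integer parameters of the extra spheres, so expressing a general symmetric quadratic requires a larger set of auxiliary spheres constrained by additional $2l$-cells that enforce the intended relations among the $a_i,b_i$. Carrying out this combinatorial bookkeeping---so that the resulting $X$ is a genuine $2l$-complex with a subdivision of $Y_l$ as subcomplex and that nullhomotopy of the substituted attaching map really does correspond precisely to the intended Diophantine equation---is the main work.
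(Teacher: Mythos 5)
Your overall architecture coincides with the paper's (and with \cite{CKM+}): the free parameters of a retraction are the degrees (for $l$ even) or bidegrees (for $l$ odd) of its restrictions to auxiliary $l$-spheres wedged onto $Y_l$; quadratic expressions in these parameters are produced by bilinearity of Whitehead products in the attaching classes and are detected by the Hopf invariant (via $H[f,g]=\pm2\deg f\deg g$) for $l$ even and by the Hilton coordinate of $[\inc,\incj]$ for $l$ odd. Your $2l$-cells are the mapping cones that the paper realizes simplicially via mapping cylinders (Proposition \ref{p:cylnew}), and your device of multiplying each attaching class by the exponent $N_l$ of the torsion of $\pi_{2l-1}(Y_l)$ is a sound alternative to the paper's even-coefficient condition (SKEW'): since $\pi_l(Y_l)$ is torsion-free, the only torsion entering is $x_ix_j[\inc,\inc]+y_iy_j[\incj,\incj]$ with $2[\inc,\inc]=2[\incj,\incj]=0$, so already the factor $2$ suffices, and because you multiply the \emph{class} rather than the resulting integer equation, you avoid exactly the pitfall discussed in Remark \ref{r:gap}.

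The genuine gap is your treatment of odd $l$. The paper does \emph{not} encode symmetric quadratics into the skew-symmetric setting: it invokes undecidability of the skew system (SKEW)/(SKEW') directly (Lemma \ref{l:solv}.b,b', taken from \cite[\S2]{CKM+}), so no conversion gadget is needed. Your plan instead requires expressing arbitrary symmetric quadratics through the antisymmetric forms $x_iy_j-x_jy_i$ by means of auxiliary spheres and extra $2l$-cells, and you leave precisely this step as ``the main work.'' As stated it is not obviously available: skew forms vanish on proportional parameter vectors, so without access to \emph{linear} terms no purely skew constraint can force the relations among the $x_i,y_i$ that your gadget needs. What would rescue it (and what your sketch omits) is that linear terms are in fact realizable in the retraction problem, because the map is the identity on $Y_l\subset X$: bracketing an auxiliary sphere against the base wedge gives, e.g., $[\iota_i,\iota_{Y,1}]\mapsto x_i[\inc,\inc]-y_i[\inc,\incj]$, i.e.\ the coordinate $y_i$ linearly modulo torsion; with linear constraints such as $y_1=0$, $y_2=x_2$, the skew form $x_1y_2-x_2y_1$ then yields the product $x_1x_2$. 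The same omission already affects your even-$l$ normal form: the linear terms $\sum_i b_i^{(k)}x_i$ cannot be realized by products $[\iota_i,\iota_j]$ of auxiliary inclusions alone (left composition acts quadratically on the Hopf invariant), so there too you must use brackets with $\iota_Y$ to obtain degree-one terms and the constants $c_k$. Until either the gadget is constructed along these lines or the skew undecidability lemma is invoked, the odd-$l$ half of the theorem remains unproven; the remaining bookkeeping you flag (effective simplicial realization of the attaching maps, cf.\ Theorem \ref{t:himi} and Proposition \ref{p:cylnew}) is routine by the paper's methods.
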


This is implied \cite{FWZ} by the following theorem and Proposition \ref{p:cylnew}.b.

Let $V^d_m=S^d_1\vee\ldots\vee S^d_m$ be the wedge of $m$ copies of $S^d$.

\begin{theorem}[extendability is undecidable]\label{t:exun}
For some fixed integer $m$ and any fixed integer $l>1$ there is no algorithm recognizing extendability of given simplicial map $V^{2l-1}_m\to Y_l$ to a PL map $X\to Y_l$ of given $2l$-complex $X$ containing a subdivision of $V^{2l-1}_m$ as a given subcomplex.
\end{theorem}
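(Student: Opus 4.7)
The plan is to reduce from an undecidable Diophantine problem. By Matiyasevich's theorem, solvability over $\mathbb Z$ of systems of integer polynomial equations is undecidable; a standard preprocessing (introduce auxiliary variables $y_{ij}:=x_ix_j$ for each intermediate product, and absorb linear terms using a unit-valued variable constrained by further quadratic equations) reduces this to solvability of a system of ``purely quadratic plus constant'' equations
\[
Q_t(x_1,\dots,x_k)\;=\;\sum_{i\le j} a^t_{ij}\,x_ix_j \;+\; b^t \;=\;0,\qquad t=1,\dots,s,
\]
which still has undecidable solvability over $\mathbb Z$. So it suffices to Karp-reduce this restricted Diophantine problem to extendability.

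Given such a system, construct $X\supset V^{2l-1}_m$ and $f\colon V^{2l-1}_m\to Y_l$ as follows. Take $l$ even first, $Y_l=S^l$, $m=1$, and let $f$ represent the Whitehead square $w:=[\iota_l,\iota_l]\in\pi_{2l-1}(S^l)$, that is, $f_*\iota_{S^{2l-1}_1}=w$. Build $X$ by adjoining to $V^{2l-1}_m$ new $l$-spheres $S^l_{x_1},\dots,S^l_{x_k}$ at a common vertex, and, for each equation $Q_t$, a $2l$-cell attached along a simplicial/PL representative of
\[
\phi_t \;=\; \sum_{i\le j} a^t_{ij}\,[\iota_{x_i},\iota_{x_j}] \;+\; b^t\,\iota_{S^{2l-1}_1}\;\in\;\pi_{2l-1}\bigl(X^{(2l-1)}\bigr).
\]
Each $\phi_t$ lies in the Hilton-Milnor decomposition $\pi_{2l-1}(X^{(2l-1)})=\bigoplus_{i\le j}\mathbb Z[\iota_{x_i},\iota_{x_j}]\oplus\bigoplus_r\mathbb Z\iota_{S^{2l-1}_r}$ and admits an explicit simplicial realization produced algorithmically from the integers $a^t_{ij},b^t$ (after a suitable subdivision).

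Any extension $g\colon X\to S^l$ of $f$ is, on the new $l$-cells, determined up to homotopy by the integer degrees $n_j:=\deg(g|_{S^l_{x_j}})$. Bilinearity of the Whitehead product gives $g_*[\iota_{x_i},\iota_{x_j}]=n_in_j\,w$, whence
\[
g_*\phi_t\;=\;\Bigl(\sum_{i\le j} a^t_{ij}\,n_in_j \;+\; b^t\Bigr)\,w\;\in\;\pi_{2l-1}(S^l).
\]
Because $w$ has infinite order (Hopf invariant $2$), the condition that $g$ extends across the $t$th $2l$-cell is $Q_t(n_1,\dots,n_k)=0$. Consequently $f$ extends to $X$ if and only if the Diophantine system admits an integer solution. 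For $l$ odd, $w=[\iota,\iota]$ is $2$-torsion in $\pi_{2l-1}(S^l)$, so instead take $Y_l=S^l\vee S^l$ and use the infinite-cyclic Hilton-Milnor generator $[\iota_1,\iota_2]\in\pi_{2l-1}(S^l\vee S^l)$ in place of $w$; every variable $x_j$ then corresponds to a pair of integer degrees (one per wedge summand), and the encoding of $Q_t$ by Whitehead products of these pairs is formally identical.

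The main obstacle will be the \emph{losslessness} of the encoding in the presence of torsion in $\pi_{2l-1}(Y_l)$: from a general extension $g$ one must read off an honest integer solution, which means projecting the relation $g_*\phi_t=0$ onto the infinite-cyclic summand via the Hopf invariant (for $l$ even) or Hilton-Milnor coordinates (for $l$ odd) and verifying that the projected equation is exactly $Q_t(n)=0$. A secondary technical point is the explicit simplicial realization of $f$, of the inclusions $\iota_{x_j}$, and of the attaching classes $\phi_t$ with combinatorial size polynomial in the Diophantine input; this is standard but must be carried out to meet the ``simplicial input'' requirement of the theorem.
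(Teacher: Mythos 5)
Your even-$l$ argument is correct and is essentially the paper's reduction in different packaging: where you attach $2l$-cells along the classes $\phi_t$ and read an extension off by its degrees $n_j$ on the added $l$-spheres, the paper takes $X=\Cyl W_s(a)$ (Proposition \ref{p:cylnew}, which is exactly how it discharges your ``secondary technical point'' of staying simplicial) and encodes the constants $b_q$ in the given map $\beta_l(b)$ rather than in a $b^t\iota_{S^{2l-1}_1}$-summand of the attaching map; in both versions the variables are degrees on $l$-spheres, the quadratic form comes from bilinearity and naturality of Whitehead products, and losslessness comes from $w=[\iota,\iota]$ having infinite order (Hopf invariant $\pm2$). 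Your stated Hilton--Milnor decomposition is inaccurate as written ($[\iota_{x_i},\iota_{x_i}]$ is not a free generator, and the weight-one summands are $\pi_{2l-1}(S^l)$, not $\Z$), but this is harmless for even $l$, where $g_*\phi_t=Q_t(n)\,w$ holds exactly.

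The genuine gap is the sentence claiming that for odd $l$ ``the encoding of $Q_t$ by Whitehead products of these pairs is formally identical.'' It is not, for two reasons. First, expanding $[x_i\inc+y_i\incj,\;x_j\inc+y_j\incj]$ for $l$ odd and using $[\incj,\inc]=-[\inc,\incj]$, the coefficient on the infinite cyclic summand generated by $[\inc,\incj]$ is the skew form $x_iy_j-x_jy_i$, which vanishes identically when $i=j$: your diagonal coefficients $a^t_{ii}$ (the squares your preprocessing produces) are erased by the Hilton projection, so the only systems you can encode are skew-symmetric ones $\sum_{i<j}a_{ij}(x_iy_j-x_jy_i)=b$, whose undecidability does not follow formally from the symmetric case and needs a separate reduction (the paper's Lemma \ref{l:solv}.b, cited from \cite{CKM+}). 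Second, your proposed torsion fix --- projecting $g_*\phi_t=0$ onto the free summand --- repairs only the direction (extension $\Rightarrow$ solution). In the converse direction, a solution of the skew system leaves $g_*\phi_t$ equal to a combination $x_ix_j[\inc,\inc]+y_iy_j[\incj,\incj]$ of the order-two Whitehead squares, which need not vanish, so the map need not extend over the $t$-th cell. The paper kills exactly these terms by forcing all coefficients $a^{i,j}_q$ to be even ((SKEW'), Lemma \ref{l:solv}.b', and Theorem \ref{t:himi}.b2); this evenness issue is precisely the subtlety behind the gap in \cite{CKM+} described in Remark \ref{r:gap}, and as written your odd-$l$ argument reproduces that gap rather than closing it.
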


This is a `concrete' version of \cite[Theorem 1.1.a]{CKM+}.

Remarks and examples below are formally not used later.

\begin{remark}\label{r:ear}
(a) {\it Relation to earlier known results.} For $l>1$ any PL map $S^1\to Y_l$ extends to $D^2$.
The analogues of Theorems \ref{t:hopfwhs} and \ref{t:exun} for $Y_l$ replaced by a complex without this property (called {\it simply-connectedness}) were well-known by mid 20th century.
See more in \cite[\S1]{CKM+}.
%Thus the above two theorems are mostly interesting for $l>1$.
%Still, even for $l=1$ they provide a simple version of known examples.

(b) {\it Why this text might be interesting.}
Exposition of the proofs of Theorems \ref{t:hopfwhs} and \ref{t:exun} here is shorter and simpler than in  \cite{CKM+}.
%This text seemingly so different from [CKM+] is an alternative exposition not an alternative proof.
I structure the proof by explicitly stating
the Brower-Hopf-Whitehead Theorems \ref{t:brow}, \ref{t:himi}, and Propositions \ref{p:ld}, \ref{p:cylnew}.
Theorems \ref{t:brow} and \ref{t:himi} relate homotopy classification to quadratic functions on integers.
Thus they allow to prove the equivalence of extendability / retractability to homotopy of certain maps, and to solvability of certain Diophantine equations, see Propositions \ref{p:ld} and \ref{p:cylnew}.
These results are essentially known before \cite{CKM+} and are essentially deduced in \cite{CKM+}
from other known results.
(As far as I know, they were not explicitly stated earlier, not even in \cite{CKM+}; cf. \cite[\S4.2]{CKM+} and \cite[Remarks 2.1.b and 2.2.e]{Sk21d}.)
%The Hopf-Whitehead
%Theorem \ref{t:himi} relates {\it homotopy classification} of maps
%$S^{2l-1}\to S^l$ and $S^{2l-1}\to S^l\vee S^l$, $l>1$,
%to quadratic functions on integers.
%Thus it allows a reduction of topological problems to
%the decidability of
%Diophantine equations (Lemma \ref{l:solv}).

Also I present definitions in an economic way accessible to non-specialists (including computer scientists).
In particular, I
%make all constructions for complexes, but
do not use cell complexes and simplicial sets.

A reader might want to consider the proof below first for $l$ even.
Then he/she can omit parts (b,b') of Lemma \ref{l:solv}, part (c) of Theorem \ref{t:brow}, and parts (b1,b2) of Theorem \ref{t:himi}.
\end{remark}

%All this allows to omit theory not required for the statements and the proofs
%(e.g. compare the Brower-Hopf-Whitehead Theorem \ref{t:himi} to cf. \cite[\S4.2]{CKM+}).
%The recovery does not require new ideas but works by leaving out some unnecessary ideas.
%[proved in {\cite[Lemma 2.1]{CKM+}}]

\begin{lemma}\label{l:solv}
(a) For some (fixed) integers $m,s$ there is no algorithm that for given arrays $a=((a^{i,j})_1,\ldots,(a^{i,j})_m)$, $1\le i<j\le s$, and $b=(b_1,\ldots,b_m)$ of integers decides whether

\quad(SYM) there are integers $x_1,\ldots,x_s$ such that
$$\sum\limits_{1\le i<j\le s}a^{i,j}_qx_ix_j=b_q\quad\text{for any}\quad 1\le q\le m.$$

(b) Same as (a) for

\quad(SKEW) there are integers $x_1,\ldots,x_s,y_1,\ldots,y_s$ such that
$$\sum\limits_{1\le i<j\le s}a^{i,j}_q(x_iy_j-x_jy_i)=b_q\quad\text{for any}\quad 1\le q\le m.$$

(b') Same as (a) for the property (SKEW') obtained from (SKEW) by replacing $a^{i,j}_q$ with $2a^{i,j}_q$.
\end{lemma}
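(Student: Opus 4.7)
The plan is to reduce all three statements from a fixed-structure form of the Matiyasevich--Davis--Putnam--Robinson theorem: for some fixed $n$ and fixed combinatorial template, it is undecidable whether a system in integer variables $z_1,\ldots,z_n$ consisting of a fixed number of multiplication relations $z_k = z_iz_j$ and a fixed number of linear equations $\sum_i \alpha_i z_i = c$ (only the right-hand constants $c$ varying with the input) admits a solution. This follows from the undecidability of Hilbert's tenth problem over $\mathbb{Z}$ by Horner-style reduction of a universal Diophantine polynomial; the integers $m,s$ in the lemma are then bounded by this fixed template size plus the auxiliary variables introduced below.

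For part (a), I introduce fresh variables $u,v$ with the equation $uv=1$, which forces $u=v=\pm 1$. Each multiplication $z_k=z_iz_j$ becomes the (SYM) equation $z_iz_j - uz_k = 0$, and each linear equation becomes $\sum_i \alpha_i(uz_i) = c$. Every monomial is a product of two distinct variables, so these are valid (SYM) equations. A Hilbert-10 solution extends by $u=v=1$. Conversely, from any (SYM) solution the tuple $(uz_1,\ldots,uz_n)$ solves Hilbert-10: in the $u=-1$ branch the encoded equations read $z_k=-z_iz_j$ and $\sum_i \alpha_i z_i = -c$, which become the original equations after negating every $z_i$.

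For part (b), I exploit the $SL_2(\mathbb{Z})$-invariance of the minors $p_{a,b} := x_ay_b - x_by_a$. Introduce pairs $(x_a,y_a)$ indexed by two ``frame'' labels $0,\infty$, by each Hilbert-10 variable $k$, and by each product $(i,j)$ appearing in the system, and impose $p_{0,\infty}=1$, $p_{\infty,k}=-1$ for each $k$, $p_{\infty,(ij)}+p_{0,j}=0$, and $p_{i,(ij)}=0$. Since the minors are $SL_2(\mathbb{Z})$-invariant and $p_{0,\infty}=1$ makes $(x_0,y_0),(x_\infty,y_\infty)$ a $\mathbb{Z}^2$-basis, I may normalize to $(x_0,y_0)=(1,0)$ and $(x_\infty,y_\infty)=(0,1)$; in this frame the above relations force $x_k=1$, $x_{(ij)}=y_j$, and $y_{(ij)}=y_iy_j$. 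Setting $z_k := y_k = p_{0,k}$, I encode each Hilbert-10 multiplication $z_k=z_iz_j$ as $p_{0,k}-p_{0,(ij)}=0$ and each linear equation as $\sum_i \alpha_i p_{0,i}=c$. Conversely, any Hilbert-10 solution $(z_k)$ gives a (SKEW) solution via $(x_0,y_0)=(1,0)$, $(x_\infty,y_\infty)=(0,1)$, $(x_k,y_k)=(1,z_k)$, $(x_{(ij)},y_{(ij)})=(z_j,z_iz_j)$.

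Part (b') follows from (b) by sending a (SKEW) instance with coefficients $a^{i,j}_q$ and constants $b_q$ to the (SKEW') instance with the same $a^{i,j}_q$ and constants $2b_q$; the resulting equations $\sum_{i<j} 2a^{i,j}_q(x_iy_j-x_jy_i) = 2b_q$ are equivalent to the original (SKEW) system. The main obstacle is the sign/$SL_2$-ambiguity of (SYM) and (SKEW) solutions, which prevents them from being Hilbert-10 solutions on the nose; the resolution, in both (a) and (b), is that this ambiguity is exactly absorbed by a global normalization of the solution tuple (negating all $z_i$ in (a), applying an element of $SL_2(\mathbb{Z})$ in (b)).
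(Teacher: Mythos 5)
Your proposal is correct, and it is in substance the same route as the paper's: for parts (a) and (b) the paper offers no argument of its own but defers to \cite[\S2]{CKM+}, and what you reconstruct is precisely that reduction --- a universal Diophantine system with a fixed multiplication template in which only right-hand constants vary, a $uv=1$ gadget whose $\pm1$ ambiguity is absorbed by the global negation symmetry for (SYM), and pairs of variables normalized via the $SL_2(\Z)$-invariance of the minors, pinned down by the imposed equation $p_{0,\infty}=1$, for (SKEW). I checked your (b) gadget: in the normalized frame $p_{\infty,k}=-x_k$ forces $x_k=1$, the equation $p_{\infty,(ij)}+p_{0,j}=0$ forces $x_{(ij)}=y_j$, and $p_{i,(ij)}=0$ then forces $y_{(ij)}=y_iy_j$, so $p_{0,(ij)}=z_iz_j$ as claimed; the converse assignment satisfies all imposed equations. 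For (b') your doubling map $(a,b)\mapsto(a,2b)$ is exactly the paper's one-line argument in contrapositive packaging: the paper observes that either all $b_q$ in (SKEW') are even --- in which case halving yields an equivalent (SKEW) instance --- or (SKEW') is trivially unsolvable; either phrasing shows a decider for (SKEW') would decide (SKEW).

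One small elision in (a): your assertion that ``every monomial is a product of two distinct variables'' fails if the Horner template contains a squaring relation $z_k=z_iz_i$, and a universal polynomial unavoidably produces such relations. The standard fix is immediate: introduce a duplicate variable $z_i'$ together with the legal (SYM) equation $uz_i'-uz_i=0$, which in both branches $u=\pm1$ forces $z_i'=z_i$, and encode the square as $z_iz_i'-uz_k=0$. (Note that in your part (b) no such fix is needed: even for $i=j$ the labels $i$ and $(ii)$ index distinct vector pairs, so $p_{i,(ii)}=0$ is a legal skew equation forcing $y_{(ii)}=z_i^2$.) With that one-line repair the argument is complete.
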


%This is essentially known,
See \cite[\S2]{CKM+} for deduction of (a,b) from insolvability of general Diophantine equations.
Part (b') follows by (b) because either all $b_q$ in (SKEW') are even or the system (SKEW') is unsolvable.

Denote by $\simeq$ homotopy between maps.
For $n>1$ we use {\it abelian group structure} on the set $\pi_n(X)$ of homotopy classes of PL maps $S^n\to X$.
Let $u,v:S^l\vee S^l\to S^l$ be the contractions of the second and the first sphere of $S^l\vee S^l$.

\begin{theorem}
%[Brower-Hopf]
\label{t:brow}
For any integer $l$ and simplicial map $\varphi:P\to Q$ between subdivisions of $S^l$ there is an effectively constructible
integer $\deg\varphi$ (called the \emph{degree} of $\varphi$) such that
%the following holds.

(a) for any integer $k$ there is an effectively constructible PL map $\widehat k:S^l\to S^l$ of degree $k$;

(b) for maps $\varphi,\psi:S^l\to S^l$ if $\deg\varphi=\deg\psi$, then $\varphi\simeq\psi$.

(c) for $l>1$ and maps $\varphi,\psi:S^l\to S^l\vee S^l$ if $\deg(u\circ\varphi)=\deg(u\circ\psi)$ and
$\deg(v\circ\varphi)=\deg(v\circ\psi)$, then $\varphi\simeq\psi$.
\end{theorem}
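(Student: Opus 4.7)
The plan is to define $\deg\varphi$ combinatorially as a signed count of preimages of a chosen target simplex, construct explicit representative maps for each integer, and then reduce parts (b) and (c) to the classical fact that $\pi_l(S^l)\cong\Z$ via degree.

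First I would fix coherent orientations on all $l$-simplices of $P$ and $Q$ (available since $|P|\cong|Q|\cong S^l$ is an orientable pseudomanifold), pick any $l$-simplex $\sigma$ of $Q$, and define
$$\deg\varphi:=\sum_{\tau}\epsilon(\varphi|_\tau),$$
where the sum runs over $l$-simplices $\tau$ of $P$ mapped by $\varphi$ onto $\sigma$, and $\epsilon(\varphi|_\tau)=\pm1$ according as $\varphi|_\tau$ preserves or reverses orientation. Independence from the choice of $\sigma$ is the standard pseudomanifold argument: two adjacent $l$-simplices of $Q$ yield the same sum by pairing preimages along their common $(l-1)$-face, so the sum is constant over $Q$. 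Applying the same reasoning to simplicial maps out of subdivisions of $S^l\times I$ makes $\deg$ a homotopy invariant.

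For (a) I would construct $\widehat k$ explicitly: the identity for $k=1$, a simplicial reflection for $k=-1$, a constant map for $k=0$, and for $|k|>1$ the composition $S^l\to V^l_{|k|}\to S^l$ in which the first map collapses the complement of $|k|$ disjoint open PL $l$-balls in $S^l$ to the wedge point, and the second sends each summand by $\widehat{\sgn k}$. Each of these becomes simplicial after an effectively bounded subdivision.

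The main obstacle is (b), a form of the Hopf degree theorem. The approach I plan is combinatorial: after refining so that both $\varphi$ and $\psi$ are simplicial into a common subdivision of $S^l$, I would iteratively cancel pairs of source $l$-simplices that map onto a common target simplex with opposite signs. A single cancellation is carried out inside a small PL $l$-ball of the source containing such a pair: the restriction of $\varphi$ to its boundary maps into $S^l$ as a map of degree zero, which is null-homotopic (this uses simple connectivity of $S^l$ for $l>1$, and is immediate for $l=1$ once the pair is recognised as a cancelling pair of arcs), and the null-homotopy extends across the ball to remove the two simplices from the preimage. After finitely many such local moves both $\varphi$ and $\psi$ are brought into the standard form $\widehat{\deg\varphi}$ from (a), so they are homotopic. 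The delicate point is choosing the cancelling balls so that no new bad pairs appear elsewhere, which requires keeping careful track of orientations along the way.

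Finally, (c) follows from (b) together with the wedge decomposition of $\pi_l$. For $l>1$ general position lets me homotope any PL map $S^l\to S^l\vee S^l$ to be transverse to the wedge point, and splitting along its preimage yields an isomorphism $\pi_l(S^l\vee S^l)\cong\pi_l(S^l)\oplus\pi_l(S^l)$ given by $(u_*,v_*)$. Combined with (b) applied to each coordinate, the degree hypotheses $\deg(u\circ\varphi)=\deg(u\circ\psi)$ and $\deg(v\circ\varphi)=\deg(v\circ\psi)$ force $\varphi\simeq\psi$.
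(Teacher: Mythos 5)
Your definition of $\deg$, your constructions for (a), and your treatment of (c) all track the paper's sketch: the paper defines $\deg\varphi$ as the signed count of preimages of a regular value, takes $\widehat k$ to be a representative of the sum of $|k|$ copies of $\widehat{\sgn k}$ in $\pi_l(S^l)$ (your pinch map $S^l\to V^l_{|k|}\to S^l$ is exactly such a representative), and disposes of (c) as a simple case of the Hilton Theorem, which is the splitting $\pi_l(S^l\vee S^l)\cong\pi_l(S^l)\oplus\pi_l(S^l)$ you invoke (your phrase ``transverse to the wedge point'' is loose --- the preimage of the wedge point is where nothing interesting happens, and the real argument is pushing homotopies off the top cell of $S^l\times S^l$ --- but the statement you use is correct and is what the paper cites Hilton for). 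The one place you diverge is (b): the paper does not prove the Hopf degree theorem but cites \cite{Ma03} and \cite[\S8]{Sk20}, whereas you attempt a from-scratch cancellation argument, and there the proposal has a genuine gap.

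The cancellation step as you describe it is circular. First, a local confusion: the restriction of $\varphi$ to the boundary $S^{l-1}$ of the cancelling ball $B$ is a map $S^{l-1}\to S^l$, for which ``degree'' is undefined; such a map is null-homotopic for trivial reasons (its image, being the simplicial image of an $(l-1)$-complex, misses a point of $S^l$), and neither this nor simple connectivity of $S^l$ is the issue. What the cancellation actually requires is to homotope $\varphi|_B$ \emph{rel} $\partial B$ to a map missing the regular value $y$. Replacing $\varphi|_B$ by some extension of $\varphi|_{\partial B}$ into the contractible set $S^l-\{y\}$ does remove the two preimages, but it may change the homotopy class of the global map: two extensions of the same boundary map differ by an arbitrary element of $\pi_l(S^l)$, and the assertion that this difference vanishes exactly when the signed local count over $y$ is zero is the relative form of (b) itself. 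So your local move presupposes the theorem it is meant to prove. This is not cosmetic: making the cancellation rigorous is the entire content of Hopf's classical proof (a delicate induction on $l$), and the standard shortcuts route through framed cobordism (Pontryagin) or homology plus the Hurewicz theorem; some such input is unavoidable, and your closing admission about choosing balls so that ``no new bad pairs appear'' flags only a secondary difficulty next to this circularity. If you replace your sketch of (b) by a citation, as the paper does, the rest of your argument is sound at the paper's own level of rigor.
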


\begin{proof}[Sketch of a proof]
Define $\deg\varphi$ by to be the sum of signs of a finite number of points from $\varphi^{-1}y$,
where $y\in S^l$ is a `random' (i.e. {\it regular}) value of $\varphi$.
More precisely, take $y$ outside the image of any $(l-1)$-simplex of $P$.
For the definition of sign and the proof of (b) see e.g. \cite{Ma03} or \cite[\S8]{Sk20}.
Part (c) is a simple case of the Hilton Theorem.

Clearly, $\deg$
%and $W_2$
defines a homomorphism $\pi_l(S^l)\to\Z$.
% and $\Z\to\pi_{2l-1}(V^l_s)$.
Let $\widehat 1:=\id S^l$, let $\widehat 0$ be the constant map, and let
$\widehat{-1}$ be the reflection w.r.t. the equator $S^{l-1}\subset S^l$.
Then for $k\ne0$ let $\widehat k$ be a representative of the sum of $|k|$ summands $\widehat{\sgn k}$.
%and $\widehat{-k}=k(\widehat{-1})$ for $k>0$.
\end{proof}

For a set $x=(x_1,\ldots,x_s)$ of integers let $\widehat x:V^l_s\to S^l$ be the map whose restriction to $S^l_j$ is $\widehat{x_j}$.
Let $\inc,\incj:S^l\to S^l\vee S^l$ be the inclusions into the first and the second sphere of the wedge.

\begin{theorem}[proved in \S\ref{s:sketch}]\label{t:himi}
For any integer $a$ there exists an effectively constructible PL map $W_2(a):S^{2l-1}\to S^l_1\vee S^l_2$ such that for any $l>1$

(a) for the composition
$W(a):S^{2l-1}\overset{W_2(a)}\to S^l_1\vee S^l_2\overset{\id\vee\id}\to S^l$
%\quad\text{
we have
%}$$

\quad (a1) for $l$ even $W(a)\simeq W(a')$ only when $a=a'$;

\quad (a2) $\widehat{(x_1,x_2)}\circ W_2(a)\simeq W(ax_1x_2)$.

(b1) $W_2(a)\simeq W_2(a')$ only when $a=a'$;

(b2) for $l$ odd $(\inc\circ\widehat{(x_1,x_2)}+\incj\circ\widehat{(y_1,y_2)})\circ W_2(2a)\simeq W_2(2a(x_1y_2-x_2y_1))$, where the map
$\inc\circ\widehat x+\incj\circ\widehat y:S^l_1\vee S^l_2\to S^l\vee S^l$ is defined to be
$\inc\circ\widehat x_j+\incj\circ\widehat y_j$ on $S^l_j$.
\end{theorem}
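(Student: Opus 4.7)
The plan is to realize $W_2(a)$ as $a$ times the Whitehead product $[\iota_1,\iota_2]$ of the two wedge inclusions $\iota_1,\iota_2:S^l\hookrightarrow S^l_1\vee S^l_2$, and then extract all four statements from bilinearity, naturality, and graded commutativity of the Whitehead product together with one Hopf-invariant computation.

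First I would construct $W_2(1)$ explicitly via the standard decomposition $S^{2l-1}=\partial(D^l\times D^l)=(S^{l-1}\times D^l)\cup(D^l\times S^{l-1})$: send $(x,y)\in S^{l-1}\times D^l$ to $\pi(y)\in S^l_2$ and $(x,y)\in D^l\times S^{l-1}$ to $\pi(x)\in S^l_1$, where $\pi:D^l\to D^l/\partial D^l=S^l$ collapses the boundary. The two rules agree on the overlap $S^{l-1}\times S^{l-1}$ (mapping everything to the wedge point), so this is a well-defined PL map, easily subdivided to a simplicial one; it is the classical representative of the Whitehead product. For general $a$ I would take $W_2(a)$ to be an effectively constructible $|a|$-fold geometric sum (with reflection for $a<0$) of $W_2(1)$ in $\pi_{2l-1}(S^l_1\vee S^l_2)$.

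Claim (a2) is then immediate: by naturality $\widehat{(x_1,x_2)}\circ[\iota_1,\iota_2]\simeq[\widehat{x_1},\widehat{x_2}]$, and by bilinearity this equals $x_1x_2[\widehat{1},\widehat{1}]$ in $\pi_{2l-1}(S^l)$, so $\widehat{(x_1,x_2)}\circ W_2(a)\simeq ax_1x_2[\widehat{1},\widehat{1}]=W(ax_1x_2)$. For (a1), the classical computation gives Hopf invariant $H([\widehat{1},\widehat{1}])=\pm 2$ for $l$ even, hence $H(W(a))=\pm 2a$ distinguishes distinct $a$. For (b1) one needs $[\iota_1,\iota_2]$ to generate an infinite cyclic summand of $\pi_{2l-1}(S^l_1\vee S^l_2)$; elementarily I would attach a $2l$-cell along $a[\iota_1,\iota_2]$ and observe that the cup product of the two degree-$l$ cohomology generators equals $\pm a$ times the top class, so different $a$ yield non-homotopy-equivalent resulting complexes and hence distinct $W_2(a)$.

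For (b2), graded commutativity of the Whitehead product gives $2[\inc,\inc]=2[\incj,\incj]=0$ and $[\incj,\inc]=-[\inc,\incj]$ when $l$ is odd. Writing $f:=\inc\circ\widehat{(x_1,x_2)}+\incj\circ\widehat{(y_1,y_2)}$ and noting that $f|_{S^l_j}$ represents $x_j\inc+y_j\incj\in\pi_l(S^l\vee S^l)$, naturality gives $f\circ W_2(2a)\simeq 2a[\,x_1\inc+y_1\incj,\,x_2\inc+y_2\incj\,]$. Bilinear expansion produces four terms; the two diagonal ones vanish because they carry a factor of $2[\inc,\inc]$ or $2[\incj,\incj]$, while the cross terms collapse via antisymmetry to $2a(x_1y_2-x_2y_1)[\inc,\incj]=W_2(2a(x_1y_2-x_2y_1))$. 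The main obstacle will be verifying the algebraic-topological inputs (bilinearity, graded commutativity, the computation $H([\widehat{1},\widehat{1}])=\pm 2$, and freeness of the Whitehead summand) within the elementary simplicial framework the paper adopts, rather than invoking general cellular machinery.
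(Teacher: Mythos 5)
Your construction of $W_2(a)$ and your arguments for (a1), (a2) and (b2) are essentially the paper's: the same decomposition $S^{2l-1}=\partial(D^l\times D^l)$ yielding the Whitehead map, the Hopf invariant computation $H(W(a))=\pm 2a$ for $l$ even, and the same bilinear expansion with $2[\inc,\inc]=2[\incj,\incj]=0$ and $[\incj,\inc]=-[\inc,\incj]$ for $l$ odd (the paper takes $W_2(a)=[\inc\circ\widehat a,\incj]$ rather than an $|a|$-fold sum of $[\inc,\incj]$, an immaterial difference for $l>1$).

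However, your proof of (b1) has a genuine gap: the homotopy type of the mapping cone detects only $|a|$, not $a$. Indeed, if $g:=\id\vee\widehat{-1}:S^l_1\vee S^l_2\to S^l_1\vee S^l_2$ is the self-homotopy-equivalence reflecting the second sphere, then the cones of $f$ and $g\circ f$ are homotopy equivalent for any attaching map $f$, while $g\circ W_2(a)\simeq W_2(-a)$; so the cones of $W_2(a)$ and $W_2(-a)$ are homotopy equivalent although the maps themselves are not homotopic for $a\ne 0$. Algebraically, a homotopy equivalence of cones induces a ring isomorphism that need not preserve your chosen generators $u_1,u_2$: it acts by an arbitrary $M\in GL_2(\Z)$ on $H^l$ and by $\pm1$ on $H^{2l}$, and comparing determinants of the cup-product pairing yields only $a^2=a'^2$. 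Thus your argument proves $W_2(a)\simeq W_2(a')\Rightarrow a=\pm a'$, which is strictly weaker than (b1) and would break the deduction $(LD)\Rightarrow(SKEW)$ in Proposition \ref{p:ld}, where (b1) is used to conclude $R_{x,y}(a^q)=b_q$ exactly, not up to sign. The paper avoids this by using an invariant of the map itself rather than of its cone: the Whitehead invariant $H_\vee(\psi):=\lk(\psi^{-1}y_1,\psi^{-1}y_2)$ with $y_1\in S^l\vee *$ and $y_2\in *\vee S^l$ regular values in the two different wedge summands, for which $H_\vee W_2(a)=\pm a$ with the sign fixed once orientations are fixed (independently of $a$), so that $a\mapsto W_2(a)$ is injective on homotopy classes. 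Equivalently, one may invoke the Hilton theorem: $[\inc,\incj]$ generates an infinite cyclic \emph{direct summand} of $\pi_{2l-1}(S^l\vee S^l)$, and the coefficient of $[\inc,\incj]$ is a well-defined homotopy invariant equal to $a$; your freeness remark gestures at this, but the cup-product-on-the-cone implementation of it loses the sign.
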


\comment

%Hopf-Whitehead;

\begin{theorem}[proved in \S\ref{s:sketch}]\label{t:himi}
For any array $a=(a^{i,j})$, $1\le i<j\le s$, of integers\footnote{For $s=2$ this array consists of one integer.} there exists
an effectively constructible PL map $W_s(a):S^{2l-1}\to V^l_s$ such that

$\bullet$ for any integer $b$, any even $l$, and the composition
$$W(b):S^{2l-1}\overset{W_2(b)}\to S^l\vee S^l\overset{\id\vee\id}\to S^l\quad\text{we have}$$
%defined by $W(b):=(\widehat 1\vee \widehat 1)\circ W_2(b)$,

\quad $W(b)\simeq W(b')$ only when $b=b'$;

\quad $\widehat x\circ W_s(a)\simeq W(Q_x(a))$, where\footnote{For $s=2$ we have $Q_x(a)=ax_1x_2$.}
$Q_x(a):=\sum\limits_{1\le i<j\le s} a^{i,j}x_ix_j$.

$\bullet$ for any odd $l>1$ we have

\quad $W_s(a)\simeq W_s(a')$ only when $a=a'$;

\quad $(\inc\circ\widehat x+\incj\circ\widehat y)\circ W_s(a)\simeq W_2(R_{x,y}(a))$, where all $a^{i,j}$ even, the map
$\inc\circ\widehat x+\incj\circ\widehat y:V^l_s\to S^l\vee S^l$ is defined to be
$\inc\circ\widehat x_j+\incj\circ\widehat y_j$ on $S^l_j$,
and\footnote{For $s=2$ we have $R_{x,y}(a)=a(x_1y_2-x_2y_1)$.}
$R_{x,y}(a):=\sum\limits_{1\le i<j\le s} a^{i,j}(x_iy_j-x_jy_i)$.
\end{theorem}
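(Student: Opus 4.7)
The plan is to realize $W_2(a)$ as a PL representative of $a\cdot[\inc,\incj]\in\pi_{2l-1}(S^l_1\vee S^l_2)$, where $[\inc,\incj]$ is the universal Whitehead product, and then to deduce all four claims from the bilinearity and graded commutativity of the Whitehead bracket together with the infinite order of two specific classes.

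To construct $W_2(1)=[\inc,\incj]$ explicitly, use the canonical PL decomposition $\partial(D^l\times D^l)=(\partial D^l\times D^l)\cup(D^l\times\partial D^l)$ of $S^{2l-1}$ and, in each piece, collapse the $D^l$-factor to a point; this yields a PL map $S^{2l-1}\to S^l_1\vee S^l_2$ representing the attaching class of the top cell of $S^l\times S^l$. For $a\in\Z$, realize $W_2(a)$ by pinching $S^{2l-1}$ to a wedge of $|a|$ copies of $S^{2l-1}$ and mapping each copy by $[\inc,\incj]$ (reflected if $a<0$); $W_2(0)$ is constant. All of this is effectively PL. For (a2), naturality of the Whitehead product gives $\widehat{(x_1,x_2)}\circ[\inc,\incj]\simeq[\widehat{x_1},\widehat{x_2}]$, and bilinearity on $\pi_l(S^l)\cong\Z$ equates this to $x_1x_2[\iota,\iota]\in\pi_{2l-1}(S^l)$; specialising to $\widehat{(1,1)}=\id\vee\id$ further identifies $W(b)$ with $b[\iota,\iota]$, so multiplying by $a$ yields $\widehat{(x_1,x_2)}\circ W_2(a)\simeq W(ax_1x_2)$. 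For (a1), the Hopf invariant satisfies $H([\iota,\iota])=\pm 2$ for $l$ even, so $[\iota,\iota]$ has infinite order in $\pi_{2l-1}(S^l)$ and $W(a)\simeq W(a')$ forces $a=a'$.

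For (b1), Hilton's splitting --- whose $s=2$ case is accessible via the cofibre sequence $S^l_1\vee S^l_2\hookrightarrow S^l\times S^l\to S^{2l}$ --- yields a $\Z$-summand of $\pi_{2l-1}(S^l_1\vee S^l_2)$ generated by $[\inc,\incj]$, so $W_2(a)\simeq W_2(a')$ forces $a=a'$. For (b2) with $l$ odd, let $f=\inc\circ\widehat{(x_1,x_2)}+\incj\circ\widehat{(y_1,y_2)}$, so $f|_{S^l_j}\simeq x_j\inc+y_j\incj$; then $f\circ[\inc,\incj]\simeq[x_1\inc+y_1\incj,\,x_2\inc+y_2\incj]$, and bilinear expansion produces four terms. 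Graded commutativity $[\alpha,\beta]=(-1)^{pq}[\beta,\alpha]$ with $p=q=l$ odd gives $[\incj,\inc]=-[\inc,\incj]$ and $2[\inc,\inc]=2[\incj,\incj]=0$. Multiplying by $2a$ therefore kills the two diagonal terms and combines the cross terms into $2a(x_1y_2-x_2y_1)[\inc,\incj]$, proving (b2).

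The main obstacle is supplying short, self-contained PL justifications for the three imported facts --- bilinearity and graded commutativity of the bracket $[-,-]$, the Hopf-invariant computation $H([\iota,\iota])=\pm 2$ for $l$ even, and the infinite order of $[\inc,\incj]$ in the wedge --- while keeping the exposition accessible to the non-specialists that the paper targets. Each reduces to a concrete manipulation of attaching maps on $S^l\times S^l$, but compressing these arguments into \S\ref{s:sketch} without sacrificing rigor is the delicate point.
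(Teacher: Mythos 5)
Your $s=2$ core is correct and is essentially the paper's own argument in \S\ref{s:sketch}: the same decomposition $S^{2l-1}=\partial(D^l\times D^l)$ giving $W_2(a)\simeq a[\inc,\incj]$ (equivalent to the paper's $[\inc\circ\widehat a,\incj]$ by bilinearity), the Hopf invariant with $H([\iota,\iota])=\pm2$ for (a1), and the bilinearity/graded-commutativity expansion for (a2),(b2), including the crucial use of evenness to kill the diagonal terms via $2[\inc,\inc]=2[\incj,\incj]=0$. However, the statement you were given concerns a general array $a=(a^{i,j})$, $1\le i<j\le s$, and a map $W_s(a):S^{2l-1}\to V^l_s$, and your proposal never constructs $W_s(a)$ for $s>2$, nor proves $\widehat x\circ W_s(a)\simeq W(Q_x(a))$ with the general quadratic form, nor the injectivity $W_s(a)\simeq W_s(a')\Rightarrow a=a'$ on arrays, nor the general skew identity. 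The missing step is routine but is part of the statement: set $W_s(a)$ to be a representative of the sum over $i<j$ of the compositions $S^{2l-1}\overset{W_2(a^{i,j})}\to S^l_i\vee S^l_j\subset V^l_s$; since post-composition induces a homomorphism of homotopy groups and $W(b)+W(b')\simeq W(b+b')$, the $Q_x$ and $R_{x,y}$ identities follow by summing your $s=2$ identities, and injectivity on arrays follows by composing with the retraction $V^l_s\to S^l_i\vee S^l_j$ that collapses the remaining spheres (all other summands die because a Whitehead product with a constant map is null-homotopic). This is exactly how the paper obtains its (a2s), (b1s), (b2s) in the deduction of Proposition \ref{p:ld}.

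One further technical flag, in (b1): your parenthetical justification via the cofibre sequence $S^l_1\vee S^l_2\hookrightarrow S^l\times S^l\to S^{2l}$ does not literally work, since homotopy groups are not exact along cofibre sequences; the honest elementary route is the homotopy exact sequence of the pair $(S^l\times S^l,\ S^l\vee S^l)$ together with an identification of $\pi_{2l}(S^l\times S^l, S^l\vee S^l)$ in the relevant range (Blakers--Massey), or an outright appeal to Hilton's theorem. The paper sidesteps all of this with the explicit Whitehead invariant $H_\vee(\psi):=\lk(\psi^{-1}y_1,\psi^{-1}y_2)$, where $y_1,y_2$ are regular values on the two wedge summands: the one-line computation $H_\vee W_2(a)=\pm a$ proves (b1) directly, stays within the paper's elementary PL framework, and combines immediately with the retraction trick above to give the array version. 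You should either replace the cofibre-sequence remark by this invariant or by a correct citation of Hilton; as written, that step is the one place where your argument, taken literally, fails.
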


%composition
%$$\inc\circ\widehat x+\incj\circ\widehat y:V^l_s\overset f\to V^l_s\vee V^l_s\overset{\inc\circ\widehat x\vee\incj\circ\widehat y}\to S^l\vee S^l,$$
%where $f$ is the contraction of all equators $S^{l-1}_j\subset S^l_j$ passing through the common points of $S^l_j$,

\endcomment

\begin{proposition}\label{p:ld}
Let $a=((a^{i,j})_1,\ldots,(a^{i,j})_m)$, $1\le i<j\le s$, and $b=(b_1,\ldots,b_m)$ be arrays of integers.
There are effectively constructible PL maps
$$W_s(a):V^{2l-1}_m\to V^l_s\quad\text{and}\quad W(b):V^{2l-1}_m\to S^l$$
such that  the property (SYM) for even $l$, and  the property  (SKEW) for odd $l>1$ and all $a^{i,j}_q$ even,
is equivalent to

(LD) there is a PL map $\varkappa:V^l_s\to Y_l$ such that $\varkappa\circ W_s(a) \simeq \beta_l(b)$.

Here $\beta_l=W$ for $l$ even, and $\beta_l=W_2$ for $l$ odd.
\end{proposition}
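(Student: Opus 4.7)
The plan is to build the maps $W_s(a)$ and $\beta_l(b)$ summand-by-summand on the wedge $V^{2l-1}_m$ and then reduce (LD) to (SYM) or (SKEW) by a componentwise application of Theorems \ref{t:brow} and \ref{t:himi}. Concretely, I would define the restriction of $W_s(a)$ to $S^{2l-1}_q$ to be a representative of the class
$$\sum_{1\le i<j\le s}\iota_{ij}\circ W_2(a^{i,j}_q)\in\pi_{2l-1}(V^l_s),$$
where $\iota_{ij}:S^l_i\vee S^l_j\hookrightarrow V^l_s$ is the natural inclusion and the sum uses the abelian group structure (available since $2l-1>1$). I would set $\beta_l(b)|_{S^{2l-1}_q}$ to be $W(b_q)$ for $l$ even and $W_2(b_q)$ for $l$ odd. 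Effective constructibility is inherited from Theorems \ref{t:brow}(a) and \ref{t:himi}.

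Next I would classify the maps $\varkappa:V^l_s\to Y_l$ up to homotopy. Since a PL map out of a wedge is determined up to homotopy by the tuple of its restrictions to the summands, and since those restrictions are classified by Theorem \ref{t:brow}(b,c), I may assume $\varkappa\simeq\widehat x$ for some $x\in\Z^s$ when $l$ is even, and $\varkappa\simeq\inc\circ\widehat x+\incj\circ\widehat y$ for some $x,y\in\Z^s$ when $l$ is odd. The crucial calculation is then, for $l$ even and each $q$,
$$\varkappa\circ W_s(a)\big|_{S^{2l-1}_q}\simeq\sum_{i<j}\widehat{(x_i,x_j)}\circ W_2(a^{i,j}_q)\simeq\sum_{i<j}W(a^{i,j}_q x_ix_j)\simeq W\Big(\sum_{i<j}a^{i,j}_q x_ix_j\Big),$$
applying Theorem \ref{t:himi}(a2) together with additivity of $W$ in its integer parameter. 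By Theorem \ref{t:himi}(a1), this is homotopic to $W(b_q)=\beta_l(b)|_{S^{2l-1}_q}$ iff the arithmetic identity of (SYM) holds at $q$; taking this over all $q$ yields (LD)$\iff$(SYM). For $l$ odd, I would write $a^{i,j}_q=2c^{i,j}_q$ (using the evenness hypothesis, which is precisely what (b2) requires) and run the parallel argument with (b1),(b2) in place of (a1),(a2) to obtain (LD)$\iff$(SKEW).

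The main obstacle I anticipate is the additivity step $\sum_i W(k_i)\simeq W(\sum_i k_i)$ (and the analogous identity for $W_2$), which is not written verbatim in Theorem \ref{t:himi}. I would deduce it from the explicit construction in Section \ref{s:sketch}: $W_2(a)$ should represent $a\cdot[\iota_1,\iota_2]$ in $\pi_{2l-1}(S^l\vee S^l)$, so it depends homomorphically on $a$, and the composition $W=(\id\vee\id)\circ W_2$ inherits this. A smaller bookkeeping detail is verifying that a map out of a wedge is homotopically determined by its restrictions to the summands; this is standard for simply-connected targets, and is why the restriction $l>1$ is essential in the odd case.
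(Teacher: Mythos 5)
Your proposal is correct and follows essentially the same route as the paper's own deduction: the same summand-by-summand construction of $W_s(a)$ and $\beta_l(b)$ on the wedge, normalization of $\varkappa$ via Theorem \ref{t:brow}(b,c) by reading off degrees, and the componentwise application of Theorem \ref{t:himi}(a1,a2) resp.\ (b1,b2) with the evenness hypothesis feeding into (b2). Even your anticipated obstacle matches the paper, which compresses the additivity of $W$ and $W_2$ in the integer parameter into the one-line distributivity remark before (a2s)--(b2s), resolved exactly as you do via the Whitehead-product bilinearity of \S\ref{s:sketch}.
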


Proposition \ref{p:ld} and Lemma \ref{l:solv}.ab' imply that {\it homotopy left divisibility} is undecidable.
See the right triangle of the left diagram below for $P=V^{2l-1}_m$, $Q=V^l_s$, $g=W_s(a)$, and $\beta=\beta_l(b)$.
$$
\xymatrix{ & P \ar[dl]_\subset \ar[d]_g \ar[dr]^\beta & \\
\Cyl g \ar@{-->}@(rd,ld)[rr] & Q \ar[l]_\subset \ar@{-->}[r]^\varkappa & Y}
\qquad
\xymatrix{P \ar[d]_g \ar[r]^{\subset} & X \ar@{-->}[dl] \ar[r]^{\subset}  & X\cup_P\Cyl g \ar@{-->}[dll]  \\
Q \ar@{=}[rr] & & Q \ar[u]_{\subset} }
$$

%Proposition \ref{p:ld} is easily deduced below from the Brower-Hopf-Whitehead Theorem \ref{t:himi}.

\begin{proof}[Deduction of Proposition \ref{p:ld} from Theorem \ref{t:himi}.]
Let

$\bullet$ $W_s^{i,j}(a^{i,j}_q)$ be the composition $S^{2l-1}\overset{W_2(a^{i,j}_q)}\to S^l_i\vee S^l_j \overset{\subset}\to V^l_s$;

$\bullet$ $W_s(a_q):S^{2l-1}\to V^l_s$ be any PL map representing the sum of the maps $W_s^{i,j}(a_q)$;

$\bullet$ $W_s(a):V^{2l-1}_m\to V^l_s$ be the map whose restriction to the $q$-th sphere is $W_s(a_q)$;

$\bullet$ $W(b):V^{2l-1}_m\to S^l$ be the map whose restriction to the $q$-th sphere is $W(b_q)$.

%Then parts (a,b) of Proposition \ref{p:ld} are simple, see \S\ref{s:sketch}.

By Theorem \ref{t:himi} and using $(\alpha_1+\alpha_2)\circ\gamma = \alpha_1\circ\gamma + \alpha_2\circ\gamma$,
 for $l>1$ we have

(a2s) $\widehat x\circ W_s(a)\simeq W(Q_x(a))$, where $Q_x(a):=\sum\limits_{1\le i<j\le s} a^{i,j}x_ix_j$.

(b1s) $W_s(a)\simeq W_s(a')$ only when $a=a'$;

(b2s) for $l$ odd $(\inc\circ\widehat x+\incj\circ\widehat y)\circ W_s(2a)\simeq W_2(2R_{x,y}(a))$, where
$R_{x,y}(a):=\sum\limits_{1\le i<j\le s} a^{i,j}(x_iy_j-x_jy_i)$.

{\it Proof that $(SYM)\Rightarrow(LD)$ for $l$ even.} Take an integer solution $x=(x_1,\ldots,x_s)$.
Let $\varkappa:=\widehat x$.
Then by (a2s) $\varkappa\circ W_s(a^q)\simeq W(Q_x(a^q))=W(b_q)$ for each $q$.
Thus $\varkappa\circ W_s(a)\simeq W(b)$.

%$$H(\varkappa\circ W_\vee(a^q))=Q_x(H_\vee W_\vee(a^q))=Q_x(a^q)=2b_q=HW(b_q).$$
%Since $H$ is injective, this implies that $\varkappa\circ W_\vee(a^q)$ is homotopic to $W(b_q)$.

{\it Proof that $(LD)\Rightarrow(SYM)$ for $l$ even.} Take the PL map $\varkappa:V^l_s\to S^l$.
Let $x_j:=\deg(\varkappa|_{S^l_j})$.
Then by Theorem \ref{t:brow}.b $\varkappa\simeq\widehat x$.
Take any $q$.
Then by (a2s)
$$W(Q_x(a^q))\simeq \widehat x\circ W_s(a^q)\simeq \varkappa\circ W_s(a^q)\simeq W(b_q).$$
Hence by (a1) of Theorem \ref{t:himi} $Q_x(a^q)=b_q$.

{\it Proof that $(SKEW)\Rightarrow(LD)$ for $l>1$ odd and all $a^q_{i,j}$ even.}
Take an integer solution $(x,y)=(x_1,\ldots,x_s,y_1,\ldots,y_s)$.
Let $\varkappa:=\inc\circ\widehat x+\incj\circ\widehat y$.
Then by (b2s) $\varkappa\circ W_s(a^q)\simeq W(R_{x,y}(a^q))=W_2(b_q)$ for each $q$.
Thus $\varkappa\circ W_s(a)\simeq W_2(b)$.

{\it Proof that $(LD)\Rightarrow(SKEW)$ for $l>$ odd and all $a^q_{i,j}$ even.}
%Take the PL map $\varkappa:V^l_s\to S^l\vee S^l$.
Let $x_j:=\deg(u\circ\varkappa|_{S^l_j})$ and $y_j:=\deg(v\circ\varkappa|_{S^l_j})$.
Then by Theorem \ref{t:brow}.c $\varkappa\simeq\inc\circ\widehat x+\incj\circ\widehat y$.
Take any $q$.
Then by (b2s)
$$W_2(R_{x,y}(a^q))\simeq(\inc\circ\widehat x+\incj\circ\widehat y)\circ W_s(a^q)\simeq\varkappa\circ W_s(a^q)\simeq W_2(b_q).$$
Hence by (b1s) $R_{x,y}(a^q)=b_q$.
\end{proof}

\begin{proposition}[proved below in \S\ref{s:mainre}]\label{p:cylnew}
For a simplicial map $g:P\to Q$ between complexes there is an effectively constructible triple $(\Cyl g;P,Q)$
of a complex $\Cyl g$ (called the \emph{mapping cylinder} of $g$) and its subcomplexes isomorphic to $P,Q$ such that

(a) for any complex $Y$ a simplicial map $\beta:P\to Y$ extends to $\Cyl g$ if and only if
there is a PL map $\varkappa:Q\to Y$ such that $\varkappa\circ g\simeq\beta$.

(b) $g$ extends to a complex $X\supset P$ if and only if the identity map of $Q$ extends to $X\cup_P\Cyl g$;
\end{proposition}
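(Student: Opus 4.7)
The plan is to build $\Cyl g$ combinatorially, exhibit a canonical simplicial retraction $r:\Cyl g\to Q$ and a PL deformation retraction of $\Cyl g$ onto $Q$, and then deduce both parts of the proposition from these.

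I would define $\Cyl g$ on vertex set $V(P)\sqcup V(Q)$ by declaring a subset $\sigma\cup\tau$, with $\sigma\subseteq V(P)$ and $\tau\subseteq V(Q)$, to be a face iff $\sigma\in F(P)$ and $g(\sigma)\cup\tau\in F(Q)$. Closure under subsets is immediate; taking $\tau=\emptyset$ embeds $P$ as a subcomplex (using that $g$ is simplicial), and taking $\sigma=\emptyset$ embeds $Q$. The vertex map sending $V(P)$ to $V(Q)$ via $g$ and fixing $V(Q)$ extends to a simplicial retraction $r:\Cyl g\to Q$, since the image of a face $\sigma\cup\tau$ is $g(\sigma)\cup\tau\in F(Q)$ by construction; in particular $r|_Q=\id_Q$ and $r|_P=g$. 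One then checks that $|\Cyl g|$ is PL-homeomorphic to the usual topological mapping cylinder $|P|\times[0,1]\cup_g|Q|$, so sliding each vertex of $V(P)$ linearly toward its $g$-image gives a PL deformation retraction $H:\Cyl g\times[0,1]\to\Cyl g$ from $\id$ to $r$, fixing $Q$ pointwise.

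For (a), given $\varkappa:Q\to Y$ PL together with a PL homotopy $J:|P|\times[0,1]\to Y$ from $\beta$ to $\varkappa\circ g$, I would identify $|\Cyl g|$ with $|P|\times[0,1]\cup_g|Q|$ and set $F:=J$ on the cylinder part and $F:=\varkappa$ on $|Q|$; the compatibility $J(p,1)=\varkappa(g(p))$ makes this a well-defined PL extension of $\beta$, possibly after a common subdivision making $J$ simplicial. Conversely, given a PL extension $F:\Cyl g\to Y$ of $\beta$, set $\varkappa:=F|_Q$; then $F$ composed with $H$ on $|P|\times[0,1]$ is a PL homotopy from $F|_P=\beta$ to $F\circ r|_P=\varkappa\circ g$.

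For (b), in the forward direction, if $f:X\to Q$ is a PL extension of $g$, then gluing $f$ with $r$ along $P$ yields a well-defined PL map $X\cup_P\Cyl g\to Q$ extending $\id_Q$, since $f|_P=g=r|_P$. For the reverse direction, given $h:X\cup_P\Cyl g\to Q$ extending $\id_Q$, set $f_0:=h|_X$; applying (a) to the PL extension $h|_{\Cyl g}$ of $\beta:=h|_P$ with $\varkappa:=h|_Q=\id_Q$ provides a PL homotopy $f_0|_P=h|_P\simeq g$. The step I expect to be the main technical obstacle is promoting this homotopy into a genuine equality on $P$: here I would invoke the standard PL homotopy extension property for the simplicial pair $(X,P)$ to modify $f_0$ inside a small collar of $P$ (after a common subdivision) into a PL map $f:X\to Q$ with $f|_P=g$ exactly.
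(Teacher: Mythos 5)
Your combinatorial construction is a genuine complex with the properties you list up to one crucial point, and that point is where the proof breaks. The complex with faces $\sigma\cup\tau$, $\sigma\in F(P)$, $g(\sigma)\cup\tau\in F(Q)$, does contain $P$ and $Q$ as subcomplexes, does admit the simplicial retraction $r$, and does admit the straight-line deformation retraction onto $Q$ (this works because $\sigma\cup\tau\cup g(\sigma)$ is again a face of your complex, so each track $(1-t)x+t|r|(x)$ stays inside a single simplex). But the claim that ``one then checks that $|\Cyl g|$ is PL-homeomorphic to $|P|\times[0,1]\cup_g|Q|$'' is false. Already for $g=\id$ of a single edge $\{a,b\}$ your complex contains the face $\{a,b\}\sqcup\{a',b'\}$ on four vertices, so its body is a $3$-simplex, while the mapping cylinder of an edge is a $2$-dimensional square; in general your $\Cyl g$ has dimension up to $\dim P+\dim Q+1$ rather than $\max(\dim P+1,\dim Q)$. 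Since your proof of the `if' direction of (a) consists precisely of gluing $J$ and $\varkappa$ along this nonexistent product structure, that step fails as written. (The paper avoids this by constructing the cylinder geometrically in $\R^{p+q+1}$ as a union of segments over the graph of $g$ and triangulating \emph{that}; the dimension bound is not a cosmetic difference, because downstream Theorem \ref{t:exun} takes $X=\Cyl W_s(a)$ and needs it to be a $2l$-complex, which your complex is not.)

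The gap is local and repairable without changing your complex: for (a) `if', note that $\varkappa\circ r$ is a PL extension of $\varkappa\circ g$ to $\Cyl g$, and since $\beta\simeq\varkappa\circ g$, the Borsuk Homotopy Extension Theorem for the pair $(\Cyl g,P)$ — the same device you already invoke for (b) — yields an extension of $\beta$ itself. (In fact the paper's own laconic ``let the required extension be $\varkappa\circ\ret g$'' extends $\varkappa\circ g$ rather than $\beta$ and implicitly needs the same homotopy-extension step.) The rest of your argument is sound for your complex and parallels the paper: your (a) `only if' via composing the given extension with the deformation retraction tracks is the paper's argument made explicit; your (b) `only if' gluing $f$ with $r$ is the paper's gluing with $\ret g$; and your (b) `if', extracting the homotopy $h|_P\simeq g$ and applying homotopy extension for $(X,P)$, is exactly the paper's proof. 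Alternatively, keep your gluing proof of (a) verbatim but replace the complex by one that genuinely triangulates the mapping cylinder, as in the paper's sketch.
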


\begin{proof}[Proof of the `extendability is undecidable' Theorem \ref{t:exun}]
Take a simplicial subdivision of $\beta_l(b)$ as a given map, and $X=\Cyl W_s(a)$.
Apply Propositions \ref{p:ld} and \ref{p:cylnew}.a (in the latter take $P=V^{2l-1}_m$, $Q=V^l_s$, $Y=Y_l$, $g=W_s(a)$, and $\beta=\beta_l(b)$).
We obtain that

$\bullet$ extendability of $\beta_l(b)$ to $X$ is equivalent to (SYM) for $l$ even;

$\bullet$ when all $a^{i,j}_q$ are even, extendability of $\beta_l(b)$ to $X$ is equivalent to (SKEW) for $l$ odd.

The latter is undecidable by Lemma \ref{l:solv}.a,b'.
\end{proof}

{\it Sketch of a construction of $\Cyl g$ in Proposition \ref{p:cylnew}.}
For a map $f:P\to Q$ between subsets $P\subset\R^p$ and $Q\subset\R^q$ define the {\it mapping cylinder} $\Cyl f$ to be the union of
$0\times Q\times 1\subset\R^p\times\R^q\times\R=\R^{p+q+1}$ and segments joining points $(u,0,0)\in\R^{p+q+1}$ to $(0,f(u),1)\in\R^{p+q+1}$,
for all $u\in P$.
See \cite[Figure in p. 14]{CKM+}.
We identify $P$ with $P\times 0\times 0$ and $Q$ with $0\times Q\times 1$.

Define the map $\ret g:\Cyl g\to Q$ by mapping to $g(u)$ the segment containing $(u,0,0)$.
%from the definition of $\Cyl g$.

For a simplicial map $g:P\to Q$ between complexes denote by $|g|:|P|\to |Q|$ the corresponding PL map
between their bodies.
Then $\Cyl|g|$ is the body of certain complex

$\bullet$ whose vertices are the vertices of $P$ and the vertices of $Q$;

$\bullet$ whose simplices are the simplices of $P$, the simplices of $Q$ and another simplices that are not hard to define.

\begin{example}\label{e:cyl}
(a) For the 2-winding $\widehat 2:S^1\to S^1$ (i.e., for the quotient map $S^1\to\R P^1$) $\Cyl\widehat 2$  is the M\"obius band (i.e. the complement to a 2-disk in $\R P^2$).

(a') For the \emph{Hopf map} $\eta:S^3\to S^2$ (i.e., for the quotient map $S^3\to\C P^1$)
$\Cyl\eta$ is the complement to a 4-ball in $\C P^2$ (i.e. the `complexified' M\"obius band).

(b) For the \emph{commutator map} $f:S^1\to S^1\vee S^1$ (i.e., $f=aba^{-1}b^{-1}$)
$\Cyl f$ is the complement to a 2-disk in $S^1\times S^1$.

(b') The cylinder of the map $W(1):S^{2l-1}\to S^l\vee S^l$ is the complement to a $2l$-ball in $S^l\times S^l$\aronly{ (this follows by footnote \ref{f:whitd})}.
\end{example}

%Proof of (a) and (b) of Proposition \ref{p:cylnew} is simple, see \S\ref{s:sketch}.

\begin{proof}[Proof of Proposition \ref{p:cylnew}]
{\it (a), `only if'.} Let $\varkappa$ be the restriction to $Q\subset\Cyl g$ of given extension.

{\it (a), `if'.} Let the required extension be $\varkappa\circ\ret g$.

{\it (b), `only if'.} Let the required extension be $\ret g$ on $\Cyl g$ and the given extension on $X$.

{\it (b), `if'.} Let $r:X\cup_P\Cyl g\to Q$ be given extension.
The composition $P\times[0,1]\to\Cyl g\to Q$ of the quotient map and $r$ is a homotopy between $r|_P$ and $g$.
Since $r|_P$ extends to $X$, by the Borsuk Homotopy Extension Theorem\footnote{\label{f:bhet}This theorem states that if $(K,L)$ is a polyhedral pair, $Q\subset\R^d$, \ $F:L\times I\to Q$ is a homotopy and $g:K\to Q$ is a map such that $g|_L=F|_{L\times0}$, then $F$ extends to a homotopy $G:K\times I\to Q$ such that $g=G|_{K\times0}$.}
 it follows that $g$ extends to $X$.
\end{proof}

\aronly{

\begin{remark}\label{r:gap} Proposition 5.2 of \cite{CKM+} asserts the equivalence of (SKEW) and extendability of $W_2(b)$ to $\Cyl W_s(a)$ (which follows by Propositions \ref{p:ld} and \ref{p:cylnew}.a).
%(2) of Proposition \ref{p:cyl} for $Y,A,V,\alpha$ and $\beta$ given after Proposition \ref{p:ld}.
The proof of Proposition 5.2 was not formally presented in \cite{CKM+}, it is written that the proposition follows from the text before.
The phrase `For this system, the above equation is exactly the one from (Q-SKEW)' before
\cite[Proposition 5.2]{CKM+} is incorrect.
%\footnote{Added in 2022. Here `For this system' is superfluous because the subsequent statement does not involve `this system'.}
Indeed, `the above equation' is an equation in $\pi_{2k-1}(S^k\vee S^k)$ not in $\Z$ (and not in the direct summand $\pi_{2k-1}(S^{2k-1})\cong\Z$ of $\pi_{2k-1}(S^k\vee S^k)$), so `the above equation' is not `exactly the one from (Q-SKEW)'.
So the phrase `We get the following:' before \cite[Proposition 5.2]{CKM+} is not justified.
For its justification one needs to prove that multiplication by 2 of `the system of $s$ equations in $\pi_{2k-1}(S^k \vee S^k)$' produces an equivalent system.
This is not so because the group $\pi_{2k-1}(S^k \vee S^k)$ (at one place denoted by $\pi_{2k-1}(S^d\vee S^d)$) can have elements of order 2 for some $k$.
Thus the `if' part of Proposition 5.2 is not proved in \cite{CKM+}.
This gap is easy to recover; e.g. it is recovered here.\footnote{I am grateful to M. \v Cadek for confirming that \cite[Proposition 5.2]{CKM+} is incorrect but is easily correctible.
If the (minor) gap would be recovered in the arXiv update of \cite{CKM+}, I would be glad to remove this footnote.
\newline
I am also grateful to L. Vok\v r\'inek for the following explanation why Remark \ref{r:gap} is not proper:
{\it There is a misprint in the statement of \cite[Proposition 5.2]{CKM+}; namely, the assumption that all coefficients $a_{ij}^{(q)}$ in \cite[(Q-SKEW)]{CKM+}=(SKEW) should be even is missing. With this assumption in place, I am not aware of any gap.
Remark \ref{r:gap} is incorrect in assuming that \cite{CKM+} uses multiplication by 2 in the homotopy group $\pi_{2k-1}(S^k \vee S^k)$; instead, the system of equations \cite[(Q-SKEW)]{CKM+}=(SKEW) with values in $\Z$ gets multiplied.}
This does not show that Remark \ref{r:gap} is not proper, because Remark \ref{r:gap} concerns only the text \cite{CKM+}, not any other non-existent text, cf. \cite[Remark 2.3.d]{Sk21d}.
The correction suggested by L. Vok\v r\'inek is proper; a list of this and all the induced corrections to
\cite{CKM+} would be helpful (or current lack of such a list is helpful)
to see how proper is to call this a misprint. Cf. \cite[Remark 2.3.abc]{Sk21d}.}
%(Also, in order to justify `We get the following:' before \cite[Proposition 5.2]{CKM+} one needs to prove that
%multiplication by 2 of `the system of $s$ equations in $\pi_{2k-1}(S^k \vee S^k)$' produces an equivalent system.
%Thus Remark \ref{r:gap} is correct in assuming that the text before \cite[Proposition 5.2]{CKM+} uses  %multiplication by 2 in the homotopy group $\pi_{2k-1}(S^k \vee S^k)$, although the part of the argument
%using this multiplication is omitted.)
\end{remark}

}

% In my opinion, recovery of the gap in the arxiv update of \cite{CKM+} is easier and more useful to the math
% community than unsuccessful attempts to justify that Remark \ref{r:gap} is not proper.

%Observe that (LD) is $(1)$ for $Y,A,V,\alpha$ and $\beta$ described in / after Proposition \ref{p:ld}.
%Recall that the map $W_s(a)$ is effectively constructed.

%the `retractability is undecidable' Theorem \ref{t:hopfwhs} follows
%$\bullet$  for $l$ even by Lemma \ref{l:solv}.a together with the equivalence of $(SYM)$ and $(3)$
%for the above $Y,A,V,\alpha$ and $\beta$: take $X=\Cyl(W_s(a),W(b))$.
%$\bullet$ for $l$ odd by Lemma \ref{l:solv}.b' together with the equivalence of $(SKEW)$ and $(3)$
%for the above $Y,A,V,\alpha$ and $\beta$, when all $a^{i,j}$ are even.
%Analogously,

\section{Known proof of  Theorem \ref{t:himi}}\label{s:sketch}

%the Hopf-Whitehead

{\it Construction of $W_2(a)$.}
Decompose
$$S^{2l-1}= \partial(D^l\times D^l) =S^{l-1}\times D^l\cup_{S^{l-1}\times S^{l-1}}D^l\times S^{l-1}.$$
Define the {\it Whitehead map} $w:S^{2l-1}\to S^l\vee S^l$ as the `union' of the compositions
$$S^{l-1}\times D^l\overset{\pro_2}\to D^l\overset{c}\to S^l\overset{\inc}\to S^l\vee S^l\quad\text{and}\quad
D^l\times S^{l-1}\overset{\pro_1}\to D^l\overset{c}\to S^l\overset{\incj}\to S^l\vee S^l.$$
Here $\pro_j$ is the projection onto the $j$-the factor, and $c$ is contraction of the boundary to a point.\aronly{\footnote{\label{f:whitd}Observe
that $S^l\times S^l\cong D^{2l}/{\sim}$, where $x\sim y \Leftrightarrow \left(x,y\in S^{2l-1}\text{ and }w(x)=w(y)\right)$.}}
It is easy to modify this `topological' definition to obtain an effectively constructible PL map $w$.
Define $W_2(a)$ in the same way as $w$ except that $\inc$ is replaced with $\inc\circ\widehat a$.
%; in other words, $W_2(a)=(\widehat1\vee\widehat0)\vee(\widehat0\vee\widehat a)\circ w$.

\smallskip
%We prove Theorem \ref{t:himi} for $s=2$; the case of arbitrary $s$ follows by the case $s=2$.

Denote by $\lk$ the {\it linking coefficient} of two collections of oriented closed polygonal lines in $\R^3$,
or, more generally, of two integer $l$-cycles in $\R^{2l-1}$.
See definition e.g. in \cite{ST80}, \cite[\S4]{Sk}.

%\aronly{ and \cite[\S4, \S8]{Sk20u}}.
%%%Например, коэффициент зацепления $w$"=прообразов двух точек из различных сфер букета равен 1.

\begin{proof}[Sketch of a proof of (a1)]
For a PL map $\psi:S^{2l-1}\to S^l$ define $H(\psi):=\lk(\psi^{-1}y_1,\psi^{-1}y_2)$, where
%$\psi_{PL}$ is a PL approximation of $\psi$ and
$y_1,y_2\in S^l$ are distinct `random' (or {\it regular}) values of $\psi$, and $\psi^{-1}$ is `oriented' preimage.
More precisely, take subdivisions of $S^{2l-1}$ and of $S^l$ for which $\psi$ is simplicial.
Then take $y_1,y_2$ outside the image of any $(l-1)$-simplex
%edge
of the subdivision of $S^{2l-1}$.
This is a well-defined homotopy invariant of $\psi$ ({\it Hopf invariant}).

For $l$ even we have $HW(b)=\pm2b$.\aronly{\footnote{For $l$ odd we have $H(\psi)=0$ for any $\psi$.}}
Hence $W(b)\simeq W(b')$ only when $b=b'$.
\end{proof}

\begin{proof}[Sketch of a proof of (b1)]
For a PL map $\psi:S^{2l-1}\to S^l\vee S^l$ define $H_\vee(\psi):=\lk(\psi^{-1}y_1,\psi^{-1}y_2)$, where
$y_1\in S^l\vee*$, $y_2\in *\vee S^l$ are `random' (or {\it regular}) values of $\psi$, and $\psi^{-1}$ is `oriented' preimage.
More precisely, take subdivisions of $S^{2l-1}$ and of $S^l\vee S^l$ for which $\psi$ is simplicial.
Then take $y_1,y_2$ outside the image of any $(l-1)$-simplex of the subdivision of $S^{2l-1}$.
This is a well-defined homotopy invariant of $\psi$ ({\it Whitehead invariant}).

Clearly, $H_\vee W_2(a)=\pm a$.
Hence $W_2(a)\simeq W_2(a')$ only when $a=a'$.
\end{proof}

\begin{proof}[Sketch of a proof of (a2) and (b2)]\aronly{\footnote{For $l=2$ the equality (a2) alternatively follows because using the definition of the degree and simple properties of linking coefficients, we see that $H((\widehat x_1\vee\widehat x_2)\circ W_2(a))=2ax_1x_2$, and because the Freudenthal-Pontryagin Theorem
%on homotopy classification of maps $S^3\to S^2$
states that {\it if $H(\varphi)=H(\psi)$ for maps $\varphi,\psi:S^3\to S^2$, then $\varphi\simeq\psi$}.}}
%Hence $(\widehat x_1\vee\widehat x_2)\circ W_2(a)\simeq W(ax_1x_2)$.
For a complex $X$ and maps $f,g:S^l\to X$ define a map $[f,g]:S^{2l-1}\to X$ in the same way as $w$ except that $S^2\vee S^2,\inc,\incj$ are replaced by $X,f,g$.
Then $w=[\inc,\incj]$, $W_2(a)=[\inc\circ\widehat a,\incj]$, and $W(b)=[\widehat b,\widehat 1]$.
This construction defines a map $[\cdot,\cdot]:\pi_l(X)\times \pi_l(X)\to\pi_{2l-1}(X)$ (called {\it Whitehead product}).
For $l>1$ we have
$$[\alpha_1+\alpha_2,\gamma]=[\alpha_1,\gamma]+[\alpha_2,\gamma]\quad\text{and}\quad [\alpha,\gamma]=(-1)^l[\gamma,\alpha].$$
%For $l$ even
Then
$$(\widehat x_1\vee\widehat x_2)\circ W_2(a) \simeq [\widehat x_1\circ\widehat a,\widehat x_2] \simeq W(ax_1x_2).$$
For $l$ odd denoting the homotopy class of a map by the same letter as the map we have
$$(\inc\circ\widehat x+\incj\circ\widehat y)\circ W_2(a) \overset{(1)} = a[x_1\inc+y_1\incj,x_2\inc+y_2\incj] \overset{(2)} =
a[x_1\inc,y_2\incj] + a[y_1\incj,x_2\inc] \overset{(3)}= W_2(a(x_1y_2-x_2y_1)).$$
Here
%for $l$ odd
equalities (1), (2), (3) hold because $(\inc\circ\widehat x+\incj\circ\widehat y)|_{S^l_j}=x_j\inc+y_j\incj$, because $[\inc,\inc]=-[\inc,\inc]$,
$[\incj,\incj]=-[\incj,\incj]$, and $a$ is even,\aronly{\footnote{For $l\in\{3,7\}$ the equality (b2) holds for $2a$ replaced by $a$,
%the assumption `all $a^{i,j}$ are even' is superfluous
because $W(1)=[\widehat 1,\widehat 1]$ is null-homotopic.}}
and because $[\incj,\inc]=-[\inc,\incj]$, respectively.
\end{proof}

\comment

%Then $H$ is injective. (In fact, $H(\eta)=1$, so $H$ is an isomorphism.)
%(using the group structure on $\pi_3(S^2)$).

Here is an alternative proof that $(\widehat x_1\vee\widehat x_2)\circ W_2(a) \simeq x_1x_2W(a)$ for $l$ odd and $a$ even.
Denote $f:=(\widehat x\vee \widehat y)\circ W_2(a)$.
Using the definition of the degree and simple properties of linking coefficients, we see that
$H_\vee(f)=a(x_1y_2-x_2y_1)$.
A map $\psi:S^{2l-1}\to S^l\vee S^l$ is {\it Borromean} if its composition with each of the contractions
$S^l\vee S^l\to S^l\vee*$ and $S^l\vee S^l\to *\vee S^l$ is homotopic to a constant map.
Clearly, $W_2(a)$ is Borromean.
It is known that $W(2)$ is null-homotopic for $l$ odd.
The composition of $f$ with the contraction $S^l\vee S^l\to S^l\vee*$ is $x_1x_2W(a)$.
Hence $f$ is Borromean for $a$ even.
The Hilton Theorem on homotopy classification of maps $S^{2l-1}\to S^l\vee S^l$ implies that
{\it if $H_\vee(\varphi)=H_\vee(\psi)$ for Borromean maps $\varphi,\psi:S^{2l-1}\to S^l\vee S^l$, then $\varphi\simeq\psi$} (this corollary was presumably proved earlier by Whitehead).
Hence $f\simeq W_2(a(x_1y_2-x_2y_1))$ for $a$ even.

%Denote by $r_j:V^l_s\to S^l_j$ be the contraction of all $S^l_i$ for $i\ne j$.
%A PL map $\psi:S^{2l-1}\to V^l_s$ is {\it Borromean} if for each $j$ the composition $r_j\circ\psi$
%is homotopic to a constant map.
%Let $\pi_{2l-1}^B(V^l_s)$ the set of Borromean maps $S^{2l-1}\to V^l_s$ up to homotopy.
%Denote by the same letter the induced map $\pi_{2l-1}(W^l)\to \pi_{2l-1}(S^l_j)$.

$$\xymatrix{A \ar[d]_{\alpha} \ar[r]^{\subset} \ar[dr]^{\beta} & \Cyl\alpha \ar[r]^{\subset} \ar@{-->}[d] & \Cyl(\alpha,\beta) \ar@{-->}[dl]  \\
V \ar@{-->}[r]_{\varkappa} &  Y \ar@{=}[r] & Y \ar[u]_{\subset} }.$$

\begin{proposition}\label{p:cyl}
For any complexes $A,V,Y$ and PL maps $\alpha:A\to V$, $\beta:A\to Y$ the following properties are equivalent:

(1) there is a PL map $\varkappa:V\to Y$ such that $\beta\simeq\varkappa\circ\alpha$;

(2) the map $\beta$ extends to a PL map $\Cyl\alpha\to Y$;

(3) the identity map of $Y$ extends to a PL map $\Cyl(\alpha,\beta)\to Y$, where the `double mapping cylinder'
$\Cyl(\alpha,\beta)$ is the union of $\Cyl\alpha$ and $\Cyl\beta\supset Y$, in which $A\subset\Cyl\alpha$ is  identified with $A\subset\Cyl\beta$.
\end{proposition}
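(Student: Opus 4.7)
My plan is to prove the three-way equivalence by showing $(1)\Leftrightarrow(2)$ and $(2)\Leftrightarrow(3)$ separately, working directly from the explicit structure of the mapping cylinder $\Cyl\alpha=(A\times I\sqcup V)/{\sim}$, where $(a,1)\sim\alpha(a)$, together with its canonical retraction $\ret\alpha:\Cyl\alpha\to V$. For $(1)\Rightarrow(2)$, choose a PL homotopy $H:A\times I\to Y$ with $H_0=\beta$ and $H_1=\varkappa\circ\alpha$, and define $F:\Cyl\alpha\to Y$ by $F=\varkappa$ on $V$ and $F(a,t)=H(a,t)$ on $A\times I$; this is compatible with the identification $(a,1)\sim\alpha(a)$ precisely because $H_1=\varkappa\circ\alpha$, and $F|_A=\beta$ by construction. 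For $(2)\Rightarrow(1)$, take $\varkappa:=F|_V$; the restriction of $F$ to $A\times I\subset\Cyl\alpha$ is then a PL homotopy from $\beta$ to $\varkappa\circ\alpha$.

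The direction $(2)\Rightarrow(3)$ is a straightforward gluing: given $F:\Cyl\alpha\to Y$ with $F|_A=\beta$, define $G:\Cyl(\alpha,\beta)\to Y$ by $G|_{\Cyl\alpha}=F$ and $G|_{\Cyl\beta}=\ret\beta$; these agree on the common subcomplex $A$ (both equal $\beta$ there), and $G|_Y=\ret\beta|_Y=\id_Y$. The converse $(3)\Rightarrow(2)$ is the only subtle step. Given $G:\Cyl(\alpha,\beta)\to Y$ with $G|_Y=\id_Y$, set $\gamma:=G|_A$ and $F:=G|_{\Cyl\alpha}$; then $F|_A=\gamma$ need not equal $\beta$. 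However, viewed on the cylindrical part $A\times I\subset\Cyl\beta$, the restriction $G|_{\Cyl\beta}$ is itself a PL homotopy from $\gamma$ at $t=0$ to $\id_Y\circ\beta=\beta$ at $t=1$. Applying Borsuk's Homotopy Extension Theorem (footnote \ref{f:bhet}) to the polyhedral pair $(\Cyl\alpha,A)$ with initial map $F$ and boundary homotopy coming from $G|_{\Cyl\beta}$ yields a PL map $F':\Cyl\alpha\to Y$ with $F'|_A=\beta$, establishing $(2)$.

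The principal obstacle is precisely this conversion of an up-to-homotopy extension into a strict extension of $\beta$; once HEP is invoked, the remaining bookkeeping is formal. A more elementary alternative avoids HEP by explicit reparametrization inside $\Cyl\alpha$: use the slab $A\times[0,\tfrac12]$ to run the homotopy from $\Cyl\beta$ from $\beta$ back to $\gamma$, and use $A\times[\tfrac12,1]\cup V$ (with coordinate rescaled) to replay $F$, producing $F'$ directly. Either way, the combination of $(1)\Leftrightarrow(2)$ with $(2)\Leftrightarrow(3)$ closes the three-way equivalence.
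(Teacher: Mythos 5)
Your proof is correct; it uses the same basic ingredients as the paper's proof (the cylinder coordinate, the retraction $\ret\beta$, and the Borsuk Homotopy Extension Theorem), but routes the implications differently, and in one place more elementarily. The paper proves the cycle $(3)\Rightarrow(1)\Rightarrow(2)\Rightarrow(3)$: the step $(3)\Rightarrow(1)$ is a one-liner (restrict the given extension $G$ to $V\subset\Cyl\alpha$; the two cylinder coordinates of $\Cyl(\alpha,\beta)$ supply the homotopies $\beta\simeq G|_A\simeq\varkappa\circ\alpha$); HET is invoked exactly once, in $(1)\Rightarrow(2)$, to trade $\varkappa\circ\ret\alpha$ (whose restriction to $A$ is $\varkappa\circ\alpha$, only homotopic to $\beta$) for a genuine extension of $\beta$; and $(2)\Rightarrow(3)$ is the same gluing with $\ret\beta$ that you give. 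You instead prove $(1)\Leftrightarrow(2)$ and $(2)\Leftrightarrow(3)$: your $(1)\Rightarrow(2)$ absorbs the homotopy $H$ into the cylinder coordinate and so needs no HET at all --- cleaner than the paper's version of this step --- while HET migrates to your $(3)\Rightarrow(2)$, an implication the paper never proves directly because it reaches $(2)$ from $(3)$ via $(1)$. So each argument uses HET once, just at different spots; the paper's cycle is a bit more economical, since its $(3)\Rightarrow(1)$ is immediate while your $(3)\Rightarrow(2)$ carries the HET bookkeeping. Finally, your closing remark is worth keeping: the reparametrization $F'(a,t)=G(a,1-2t)$ (in the $\Cyl\beta$-coordinate) on $A\times[0,\tfrac12]$, followed by a rescaled copy of $G|_{\Cyl\alpha}$ on $A\times[\tfrac12,1]\cup V$, is easily checked to be well defined (the seams match at $t=\tfrac12$, where both pieces give $\gamma=G|_A$, and at $t=1$ via the identification $(a,1)\sim\alpha(a)$), and combined with your direct $(1)\Rightarrow(2)$ it makes the entire proposition HET-free --- something neither the paper's argument nor your main line achieves.
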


\begin{proof}[Proof of Proposition \ref{p:cyl}]
$(3)\Rightarrow(1)$ (or $(2)\Rightarrow(1)$).
Let $\varkappa$ be the restriction to $V\subset \Cyl\alpha$ of given extension.

$(1)\Rightarrow(2)$.
Define the map $\ret\alpha:\Cyl\alpha\to Q$ by mapping to $\alpha(u)$ the segment containing $(u,0,0)$ from the definition of $\Cyl\alpha$.
By the Borsuk Homotopy Extension Theorem extendability is equivalent to homotopy extendability.
So we can take the required extension to be $\varkappa\circ\ret\alpha$.

$(2)\Rightarrow(3)$. Define the required extension to be $\ret\beta$ on $\Cyl\beta$
and to be the given extension on $\Cyl\alpha$.
\end{proof}

%\begin{proposition}[easy]\label{}
%For any even $l$ any of the following properties is equivalent to any of the above properties (i), (ii):
%(iii) the map $W(b):A_m\to S^l$ extends to a PL map $\Cyl W_2?(a)\to S^l$.

%mapping cylinders of maps
%$:=\Cyl(W_2?(a),W(b))$
%Lemma 8

%(iv) the identity map $S^l\to S^l$ extends to a PL map $X(a,b)\to S^l$, where the `double mapping cylinder'
%$X(a,b)$ is the union of $\Cyl W_2?(a)$ and $\Cyl W(b)\supset S^l$ along $A_m$.
%\end{proposition}

\endcomment

\section{Appendix: is embeddability of complexes undecidable in codimension $>1$?}\label{s:plan}

%uncomment for arxiv
\UseRawInputEncoding

Realizability of hypergraphs or complexes in the $d$-dimensional Euclidean space $\R^d$ is defined similarly to the realizability of graphs in the plane.
E.g. for 2-complex one `draws' a triangle for every three-element subset.
There are different formalizations of the idea of realizability.
%, all of them useful.

%Неформально, 2-гиперграф {\it линейно вложим в трехмерное пространство}, если его можно без самопересечений %нарисовать в пространстве так, чтобы грани изображались плоскими двумерными треугольниками.
%A \emph{linear realization} of the 2-complex $(V,E,F)$ in $\R^d$ is an embedded set of triangles and segments in %$\R^d$ whose vertices correspond to $V$, whose segments correspond to $E$ and whose triangles correspond to $F$.

A complex $(V,F)$ is {\bf simplicially} (or linearly) {\bf embeddable} in $\R^d$ if there is a set $V'$ of distinct points in $\R^d$ corresponding to $V$ such that for any subsets $\sigma,\tau\subset V'$ corresponding to elements of $F$ the convex hull $\left<\sigma\right>$ is a
%non-degenerate
simplex of dimension $|\sigma|-1$ and $\left<\sigma\right>\cap\left<\tau\right>=\left<\sigma\cap\tau\right>$.
\algor{\footnote{This property means that there is an embedded set of simplices in $\R^d$ whose vertices correspond to $V$ and whose simplices correspond to $F$ (an embedded set of simplices of different dimensions in $\R^d$ is defined analogously to \S\ref{0-reaemb}).}
Это свойство формализует <<отсутствие самопересечений>>.}

A complex is {\bf PL} (piecewise linearly) {\bf embeddable} in $\R^d$ if some its subdivision is simplicially embeddable in $\R^d$.\aronly{\footnote{The
related different notion of being topologically embeddable is not required in this text.}}

For classical and modern results on embeddability and their discussion see e.g. surveys \cite{Sk06}, \cite[\S3]{Sk18}, \cite[\S5]{Sk}.

\begin{theorem}[embeddability is undecidable in codimension 1]\label{t:undec1} For every fixed $d,k$ such that
$5\le d\in\{k,k+1\}$ there is no algorithm recognizing PL embeddability of $k$-complexes in $\R^d$.
\end{theorem}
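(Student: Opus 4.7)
My plan is to reduce the retractability problem of Theorem \ref{t:hopfwhs} to the embeddability problem. Given fixed $d,k$ with $d\in\{k,k+1\}$ and $d\ge 5$, I choose an even integer $l>1$ with $2l\le k$ (for instance $l=2$ already works whenever $k\ge 4$). Theorem \ref{t:hopfwhs} then supplies an undecidable family of pairs $(X,S^l)$ with $X$ a $2l$-complex containing $S^l$; the goal is to construct, effectively from $X$, a $k$-complex $Z=Z(X)$ such that $Z$ PL-embeds in $\R^d$ if and only if the identity of $S^l$ extends to a PL map $X\to S^l$.

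The candidate construction is a symmetric thickening along $S^l$, dimension-raised by a join with a sphere: set
$$Z := (X\cup_{S^l}X) * \partial\Delta^{k-2l},$$
where $X\cup_{S^l}X$ is the double of $X$ along $S^l$ and $\partial\Delta^{k-2l}$ is the boundary of a simplex (empty when $k=2l$), so that $\dim Z=k$. The rationale is that a retraction $r:X\to S^l$ lets one PL-embed $X\cup_{S^l}X$ in a tubular neighborhood of a standardly embedded $S^l\subset\R^{2l+1}$ by placing the two copies of $X$ as graphs of $r$ on opposite sides of the tube; the join with $\partial\Delta^{k-2l}$ then embeds $Z$ in $\R^d$ via the standard join embedding into the remaining coordinates.

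The forward direction (retraction $\Rightarrow$ embedding) should be a direct mapping-cylinder construction building on Proposition \ref{p:cylnew}, together with the standard embedding of a join into Euclidean space. The main obstacle is the converse: extracting a PL retraction from an arbitrary PL embedding $\iota:Z\hookrightarrow\R^d$. The plan is to exploit the codimension constraint $d-k\le 1$: the embedded $S^l\subset Z$ has a narrow normal tube in $\R^d$, and the two copies of $X$ inside the double are forced, by scarcity of transverse room, to lie on opposite sides of this tube. Fiberwise projection of one copy onto $S^l$, followed by homotopy straightening via the Borsuk Homotopy Extension Theorem (footnote \ref{f:bhet}) and the degree classification of Theorem \ref{t:brow}, should yield the required PL retraction. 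The delicate point is showing that the embedding is rigid enough to force this separation: in higher codimension the two copies of $X$ can unlink and wander freely, decoupling them from any retraction and breaking the equivalence, so the argument must genuinely use $d-k\le 1$. This separation step is, I expect, where most of the technical work lies, and it is also what ties the reduction to the specific codimension regime of the theorem.
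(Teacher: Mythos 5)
Your reduction does not work, and in both directions. The forward implication (retraction $\Rightarrow$ embedding) is false as stated: a retraction $r:X\to S^l$ is merely a map, and its ``graph'' lives in $X\times S^l$, not in any neighborhood of $S^l$ in $\R^{2l+1}$; possessing a retraction says nothing about embeddability of $X$ itself. Concretely, take $X=S^l\vee K$ where $K$ is the $2l$-skeleton of the $(4l+2)$-simplex: $X$ retracts to $S^l$ by collapsing $K$, yet $K$ does not embed in $\R^{4l}$ (van Kampen--Flores), so $X$ certainly does not embed in $\R^{2l+1}$, and neither does the double, so your $Z$ cannot be embedded by the scheme you describe. Even granting that step, the dimension count fails: the standard join embedding from $(X\cup_{S^l}X)\subset\R^{2l+1}$ and $\partial\Delta^{k-2l}\subset\R^{k-2l}$ lands in $\R^{k+2}$, which exceeds both admissible targets $d\in\{k,k+1\}$. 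The converse direction (embedding $\Rightarrow$ retraction) is left entirely to hoped-for ``rigidity'' of codimension-$\le 1$ embeddings, but no such mechanism exists: there is no analogue in codimension $0,1$ of the metastable machinery (the Zeeman--Irwin theorem, Lemma \ref{t:irwstr}) that relates maps to embeddings, and the paper itself documents how delicate the map--embedding link is even in the metastable range, where the FWZ argument for Conjecture \ref{t:undec} has a gap.

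More fundamentally, you chose the wrong source of undecidability. The paper does not prove Theorem \ref{t:undec1} via Theorem \ref{t:hopfwhs} at all: it is deduced in \cite[Theorem 1.1]{MTW} from the Novikov theorem on algorithmic unrecognizability of the $d$-sphere for $d\ge5$ (cf. \cite[Remark 3]{NW97}). That is the essential input in codimension $0$ and $1$, where embeddability is governed by manifold-recognition phenomena rather than by homotopy-theoretic extension problems; the retractability/extendability results (Theorems \ref{t:hopfwhs} and \ref{t:exun}) are relevant instead to the still-conjectural codimension-$>1$ case (Conjecture \ref{t:undec}), and no reduction of the kind you propose is known in either regime.
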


This is deduced in \cite[Theorem 1.1]{MTW} from the Novikov theorem on unrecognizability of the $d$-sphere.
Cf. \cite[Remark 3]{NW97}.

\begin{conjecture}[embeddability is undecidable in codimension $>1$]\label{t:undec} For every fixed $d,k$ such that $8\le d\le \frac{3k+1}2$ there is no algorithm recognizing PL embeddability of $k$-complexes in $\R^d$.
\end{conjecture}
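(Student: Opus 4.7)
The plan is to adapt the Filakovsk\'y-Wagner-Zhechev strategy (cf. Remark \ref{r:gap}), reducing the extendability problem of Theorem \ref{t:exun} to PL embeddability of $k$-complexes in $\R^d$. The overall scheme has two parts: first translate embeddability into a $\Z/2$-equivariant extension problem, then translate that extension problem into one that is already known to be undecidable.

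First I would invoke the deleted-product reformulation. For a $k$-complex $K$ let $\widetilde K\subset K\times K$ denote the deleted product with its swap $\Z/2$-action. Any PL embedding $K\hookrightarrow\R^d$ induces a $\Z/2$-equivariant map $\widetilde K\to S^{d-1}$ sending $(x,y)$ to $(f(x)-f(y))/|f(x)-f(y)|$; by the Haefliger-Weber theorem the converse holds in the metastable range $2d\ge 3(k+1)$. The conjecture's range $d\le (3k+1)/2$ sits at the boundary of this range, where the converse fails for arbitrary complexes but can be forced for specifically constructed ones if one stays slightly inside the metastable window.

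Second, given an instance $(X\supset V^{2l-1}_m,\beta_l(b))$ of Theorem \ref{t:exun}, I would build a $k$-complex $K=K(X,\beta,b)$ whose deleted-product extension problem reduces, via the mapping cylinders of Proposition \ref{p:cylnew} and the Whitehead-product calculations of Theorem \ref{t:himi}, to the solvability of (SYM) or (SKEW') for the given data. A natural candidate is a $\Z/2$-symmetric `doubling' of $\Cyl W_s(a)$ along $V^{2l-1}_m$, in which the coordinate swap on $\widetilde K$ corresponds to the swap of the two copies, so that the equivariant extension problem decouples into a non-equivariant one of precisely the shape controlled by Proposition \ref{p:ld}.

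The main obstacle is the one that produced the gap in \cite{FWZ} identified in Remark \ref{r:gap}: multiplication by $2$ in $\pi_{2l-1}(S^l\vee S^l)$ (and in its equivariant analogue) need not be injective, so naively replacing an integer equation with its homotopy counterpart can lose information. As in the proof of Theorem \ref{t:exun}, I would sidestep this by restricting to instances with all $a^{i,j}_q$ even and applying Lemma \ref{l:solv}.b' in place of Lemma \ref{l:solv}.b. The remaining technical work, and probably the hardest part of the argument, is to verify that the constructed $K$ stays genuinely $k$-dimensional, that the $\Z/2$-symmetric double indeed has the predicted equivariant extension theory, and that a refined Haefliger-Weber-type `iff' still holds for this particular $K$ at the boundary $d=(3k+1)/2$.
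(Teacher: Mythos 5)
You are attempting to prove what is, in this paper, a \emph{conjecture}, not a theorem: the whole point of \S\ref{s:plan} is that the only known proof attempt \cite{FWZ} has a gap, and your proposal does not close it --- it defers exactly the step where the gap lives. In the paper's scheme the cheap directions are: $(SKEW)\Leftrightarrow(Ex)$ by Propositions \ref{p:ld} and \ref{p:cylnew} plus the deformation retraction of $S^{3l+2}-(S^{2l+1}\vee S^{2l+1})$ to the meridian, and $(Ex)\Rightarrow(Ex')$ by the Zeeman--Irwin-type Lemma \ref{t:irwstr}. The hard direction is $(Em)\Rightarrow(Ex')$ of Conjecture \ref{t:ckmvwz}: one must force an arbitrary PL embedding of $X(a,b)\cup_{S^l\vee S^l}G$ to position $S^l\vee S^l$ as a meridian, with linking coefficients exactly $\pm1$. \cite{FWZ} tried to achieve this with a gadget complex $G$ via \cite[Lemma 30]{FWZ}, a strengthening of the Segal--Spie\.z Lemma \ref{l:ramsey} from `linked modulo 2' to `linking coefficient $\pm1$'; the statement their proof of that lemma rests on is false (Example \ref{e:link}), and Theorem \ref{t:ss1ae} shows that every odd linking coefficient actually occurs, so the strengthened lemma is presumably incorrect. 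Your plan contains the same hole in different clothing: you concede that the Haefliger--Weber `iff' fails at $d=(3k+1)/2$ --- indeed $2d=3k+1<3(k+1)$ is \emph{strictly below} the metastable range, not at its boundary, and there the deleted-product criterion is incomplete --- and you say completeness ``can be forced for specifically constructed'' complexes. Producing such a forcing is precisely the unproved content of the conjecture, and the evidence just cited says the natural forcing mechanism fails.

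You also misdiagnose the obstacle. The multiplication-by-2 issue of Remark \ref{r:gap} concerns \cite[Proposition 5.2]{CKM+}, i.e.\ the extendability Theorem \ref{t:exun}, and is already repaired in \S\ref{s:mainre} by restricting to even $a^{i,j}_q$ and using Lemma \ref{l:solv}.b'; that restriction does nothing for the embeddability reduction. Because an embedding of the glued complex can pin down the meridian only up to an odd linking coefficient, the honest outcome of the paper's analysis is that $(Em)$ yields solvability not of $(SKEW)$ but of $(SKEW1)$ (respectively $(SYM1)$) of Conjecture \ref{c:solodd}, with $b_q$ replaced by $(2z+1)b_q$ --- and the undecidability of those systems is itself open: by Remark \ref{r:equiv} it is equivalent to undecidability of Diophantine equations over rationals with odd denominators, a relative of Hilbert's tenth problem over $\Q$, stated as open in (**). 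So a correct write-up of your plan would terminate not in a proof but in the same open Conjectures \ref{c:solodd}, \ref{t:ckmvwzp}, \ref{t:ckmvwzg} that the paper formulates.
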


\aronly{Conjecture \ref{t:undec} easily follows from its `extreme' case $2d=3k+1=6l+4$ \cite[Corollaries 4 and 6]{FWZ}.
The extreme case is implied by the equivalence $(SKEW)\Leftrightarrow(Em)$ of Conjecture \ref{t:ckmvwzg} below.\footnote{The extreme case is also implied by the equivalence between $(SKEW1)$ of Conjecture \ref{c:solodd}.a and the analogue of $(Em2)$ from Conjecture \ref{t:ckmvwzg} for `almost embedding' replaced by
`embedding'.
The extreme case for $l$ even is also implied by the equivalence between $(SYM1)$ of Conjecture \ref{c:solodd}.b and the analogue of $(Em1)$ from Conjecture \ref{t:ckmvwzp} for `almost embedding' replaced by
`embedding'.}}

Conjecture \ref{t:undec} is stated as a theorem in \cite{FWZ}.
The proof in \cite{FWZ} contains a gap described below.
Their idea is to elaborate the following remark to produce the reduction (described below) to the `retractability is undecidable' Theorem \ref{t:hopfwhs}.

%\begin{figure}[h]
%\centerline{\includegraphics[scale=0.8]{borr_tri.eps}\qquad
%\includegraphics[scale=0.35]{Icosahedron-golden-rectangles.eps}}
%\caption{Borromean triangles (Valknut) and quadrilaterals (icosahedron)}
%\label{f-borr}
%\end{figure}

\begin{remark}
Homotopy classifications of maps $S^{2l-1}\to S^l$ and $S^{2l-1}\to S^l\vee S^l$ are related to isotopy classification of links of $S^{2l-1}\sqcup S^{2l-1}$ and of $S^{2l-1}\sqcup S^{2l-1}\sqcup S^{2l-1}$ in $\R^{3l}$ \cite{Ha62l}
(including higher-dimensional Whitehead link and Borromean rings \cite[\S3]{Sk06}).
E.g. the {\it generalized linking coefficients} of the Whitehead link and of the Borromean rings are (the homotopy classes)
of the Whitehead maps $W(1):S^{2l-1}\to S^l$ and $W_2(1):S^{2l-1}\to S^l\vee S^l$ from Theorem \ref{t:himi}.
Analogous results for $l=1$ do illustrate some ideas, see a description accessible to non-specialists in \cite[\S3.2]{Sk20}.
\end{remark}

%When this conjecture us proved, I hope that $\beta$-invariant of [Sk07] could be used to extend
%this result to $d=\frac{3k}2+1$.

%First we explain the approach of \cite{FWZ} to Conjecture \ref{t:undec} and the gap.

We use the notation of \S\ref{s:mainre}.
In this section $a=((a^{i,j})_1,\ldots,(a^{i,j})_m)$, $1\le i<j\le s$, and $b=(b_1,\ldots,b_m)$ are arrays of integers.
Define the {\it double mapping cylinder} $X(a,b)$
%:=\Cyl(W_s(a),W_2(b))$.
to be the union of $\Cyl W_s(a)$ and $\Cyl W_2(b)\supset Y$, in which $V_m^{2l-1}\subset\Cyl W_s(a)$ is  identified with $V_m^{2l-1}\subset\Cyl W_2(b)$.

Assume that $S^{2l+1}\vee S^{2l+1}$ is standardly embedded into $S^{3l+2}$.
Take a small oriented $(l+1)$-disks $D_+,D_-\subset S^{3l+2}$

$\bullet$ intersecting at a point in $\partial D_+\cup\partial D_-$;

$\bullet$ whose intersections with $S^{2l+1}\vee S^{2l+1}$ are transversal and consist of exactly one point $D_+\cap(S^{2l+1}\vee S^{2l+1})\in S^{2l+1}\vee*$ and $D_-\cap(S^{2l+1}\vee S^{2l+1})\in *\vee S^{2l+1}$.

Define {\it the meridian $\Sigma^l\vee\Sigma^l$ of $S^{2l+1}\vee S^{2l+1}$ in $S^{3l+2}$} to be
$\partial D_+\cup\partial D_-$.

\begin{conjecture}\label{t:ckmvwz} For any odd integer $l$ and all $a^{i,j}_q$ even there is a $(2l+1)$-complex $G\supset S^l\vee S^l$ such that any of the following properties is equivalent to (SKEW):

(Ex) a PL homeomorphism of $S^l\vee S^l\to\Sigma^l\vee\Sigma^l$ of $S^{2l+1}\vee S^{2l+1}$ in $S^{3l+2}$ extends to a PL map $X(a,b)\to S^{3l+2}-(S^{2l+1}\vee S^{2l+1})$.

(Ex') a PL homeomorphism of $S^l\vee S^l\to\Sigma^l\vee\Sigma^l$ extends to a PL embedding
$X(a,b)\to S^{3l+2}-(S^{2l+1}\vee S^{2l+1})$.

(Em) $X(a,b)\cup_{S^l\vee S^l}G$ embeds into $S^{3l+2}$.
\end{conjecture}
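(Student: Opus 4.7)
The plan is to establish the chain (SKEW) $\Leftrightarrow$ (Ex) $\Leftrightarrow$ (Ex') $\Leftrightarrow$ (Em), of which only the last equivalence requires a genuine construction of $G$; the other two are, respectively, a duality-plus-homotopy reduction and a general-position argument in the metastable range.

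For (SKEW) $\Leftrightarrow$ (Ex), I would use Alexander duality: the complement $C:=S^{3l+2}\setminus(S^{2l+1}\vee S^{2l+1})$ PL deformation retracts onto the meridian $\Sigma^l\vee\Sigma^l$, which is PL homeomorphic to $Y_l=S^l\vee S^l$. Composing any PL extension $X(a,b)\to C$ of a homeomorphism $S^l\vee S^l\to\Sigma^l\vee\Sigma^l$ with this retraction, and invoking the Borsuk Homotopy Extension Theorem, reduces (Ex) to the extendability of the identity of $Y_l$ to $X(a,b)=\Cyl W_s(a)\cup_{V^{2l-1}_m}\Cyl W_2(b)$. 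Proposition \ref{p:cylnew}.b applied to $W_2(b):V^{2l-1}_m\to Y_l$ (with $\Cyl W_s(a)$ playing the role of $X$) further reduces this to the extendability of $W_2(b)$ to $\Cyl W_s(a)$, which by Proposition \ref{p:cylnew}.a is precisely (LD); and (LD) is (SKEW) by Proposition \ref{p:ld} for $l$ odd and all $a^{i,j}_q$ even.

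For (Ex) $\Leftrightarrow$ (Ex'), the direction (Ex') $\Rightarrow$ (Ex) is immediate. For the converse I would run the standard general-position plus Whitney-move argument: since $\dim X(a,b)=2l$ and the ambient dimension is $3l+2$, the codimension is $l+2$, placing one in the PL metastable range $2(3l+2)\ge 3\cdot 2l+3$, and a relative Haefliger--Weber-type theorem makes the PL map PL ambient isotopic rel $S^l\vee S^l$ to a PL embedding: the equivariant deleted-product obstruction, being extracted from the existing map, vanishes automatically, so all excess double points can be cancelled while keeping the embedding fixed on $S^l\vee S^l$.

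For (Ex') $\Leftrightarrow$ (Em), I plan to take $G$ to be a $(2l+1)$-complex triangulating the standard PL spine of a regular neighborhood of the pair (meridian, wedge-of-spheres): namely, $G$ is the union of $S^l\vee S^l$, two transverse $(l+1)$-disks $D_\pm$ whose boundaries are the two $S^l$ factors, and two PL $(2l+1)$-spheres geometrically linking $D_\pm$ at their single common transverse intersection point. Then (Ex') $\Rightarrow$ (Em) because $G$ embeds canonically in any tubular neighborhood of $\Sigma^l\vee\Sigma^l$ disjoint from $X(a,b)\setminus(S^l\vee S^l)$. Conversely, the two $(2l+1)$-spheres inside any PL embedding of $X(a,b)\cup_{S^l\vee S^l}G$ form a copy of $S^{2l+1}\vee S^{2l+1}$ in $S^{3l+2}$; a PL unknotting argument in codimension $l+1\ge 2$ then isotopes this pair onto the standard $S^{2l+1}\vee S^{2l+1}$, carrying $S^l\vee S^l$ onto $\Sigma^l\vee\Sigma^l$, so $X(a,b)$ ends up in the prescribed complement, yielding (Ex'). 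The main obstacle will be here: one must show that $G$ is rigid enough that every PL embedding $G\hookrightarrow S^{3l+2}$ is ambiently PL isotopic to the standard one realizing the prescribed linking. The triangulation of $G$ has to be tight enough---high enough dimension to lock in the linking of the $(2l+1)$-spheres with the $(l+1)$-disks, but not so loose as to permit additional knotting of the spheres themselves---and this uniqueness-of-PL-embedding lemma for $G$ appears to be the nontrivial ingredient implicit in the Filakovsk\'y-Wagner-Zhechev approach.
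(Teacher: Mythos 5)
You have correctly identified the skeleton, and your first two steps essentially coincide with the paper's treatment: the equivalence (SKEW)$\Leftrightarrow$(Ex) is obtained exactly as you describe, via the strong deformation retraction of $S^{3l+2}-(S^{2l+1}\vee S^{2l+1})$ onto the meridian $\Sigma^l\vee\Sigma^l$ together with Propositions \ref{p:ld} and \ref{p:cylnew}.ab, and (Ex)$\Rightarrow$(Ex') is in the paper a Zeeman--Irwin-type engulfing result (Lemma \ref{t:irwstr}, a restatement of \cite[Theorem 10]{FWZ}). Your Haefliger--Weber substitute for the latter is not sound as stated: a PL map $X(a,b)\to S^{3l+2}-(S^{2l+1}\vee S^{2l+1})$ does not make the equivariant deleted-product obstruction ``vanish automatically'' (that obstruction is not extracted from a non-embedding map into an open manifold), and any such embedding theorem gives an embedding whose restriction to $S^l\vee S^l$ agrees with the given homeomorphism only up to homotopy, which requires a further adjustment. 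Still, this step is recoverable by citing Lemma \ref{t:irwstr}.

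The genuine gap is your last step, and it is precisely the reason the paper states this result as a \emph{conjecture} rather than a theorem. Your (Em)$\Rightarrow$(Ex') argument defers everything to a ``uniqueness-of-PL-embedding lemma'' for $G$: that every PL embedding of $X(a,b)\cup_{S^l\vee S^l}G$ places the pair of $(2l+1)$-spheres and $S^l\vee S^l$ with linking coefficients exactly $\delta_{i,j}$, so that an ambient isotopy moves them to the standard position. This is exactly \cite[Lemma 30]{FWZ}, the modification of the Segal--Spie\.z Lemma \ref{l:ramsey} in which ``linked modulo 2'' is strengthened to ``linking coefficient $\pm1$'' --- and the paper's point is that this lemma is unproved and presumably false. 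The step used at the end of its proof in \cite{FWZ} is refuted by Example \ref{e:link}: a single transverse intersection point of $f(D^p)$ with $g(S^q)$ does \emph{not} force the linking coefficient of $f|_{S^{p-1}}$ and $g$ to be $\pm1$; and Theorem \ref{t:ss1ae} of \cite{KS20} produces almost embeddings of the Segal--Spie\.z complex with arbitrary odd linking coefficient $2z+1$. Only mod 2 control survives, so what one can expect from (Em) is an extension over an \emph{odd} (almost) embedding, i.e.\ the equivalence with (SKEW1) rather than (SKEW) --- that is Conjecture \ref{t:ckmvwzg} --- and the undecidability of (SKEW1) is itself open (Conjecture \ref{c:solodd}, related by Remark \ref{r:equiv} to Hilbert's tenth problem over rationals with odd denominators). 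So your proposal does not prove the statement: it reproduces the Filakovsk\'y--Wagner--Zhechev gap at exactly the point where you flag ``the main obstacle,'' and that obstacle is not an implicit known ingredient but the open (indeed, as formulated in \cite{FWZ}, presumably incorrect) heart of the matter.
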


All the implications except $(Em)\Rightarrow(Ex')$ are correct results of \cite{FWZ}.

The implication $(Ex')\Rightarrow(Ex)$ is clear.

The equivalence of $(Ex)$ and (SKEW)
%for $Y=S^2\vee S^2$, $A=V^{2l-1}_m$, $V=V^l_s$, $\alpha=W_s(a)$ and $\beta=W_2(b)$
follows by Propositions \ref{p:ld} and \ref{p:cylnew}.ab because there is a {\it strong deformation retraction}
$S^{3l+2}-(S^{2l+1}\vee S^{2l+1})\to\Sigma^l\vee \Sigma^l$.

%Let $Y^c=S^{2l+1}$ for $l$ even and $Y=S^{2l+1}\vee S^{2l+1}$ for $l$ odd.

The implication $(Ex)\Rightarrow(Ex')$ is implied by the following version of {\it the Zeeman-Irwin Theorem}
\cite[Theorem 2.9]{Sk06}.

%\cite[\S2]{Sk16c}.

\begin{lemma}\label{t:irwstr}
For any PL map $f:X(a,b)\to S^{3l+2}-(S^{2l+1}\vee S^{2l+1})$
%to the interior of a $(3l+1)$-dimensional $l$-connected PL manifold $M$ (possibly, with boundary)
there is a PL embedding $f':X(a,b)\to S^{3l+2}-(S^{2l+1}\vee S^{2l+1})$ such that the restrictions of $f$ and $f'$ to $S^l\vee S^l\subset X(a,b)$ are homotopic.
\end{lemma}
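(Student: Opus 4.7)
The plan is to reduce the lemma to the classical Zeeman-Irwin PL embedding theorem \cite[Theorem 2.9]{Sk06}. Set $K := X(a,b)$ and $M := S^{3l+2}-(S^{2l+1}\vee S^{2l+1})$. Then $\dim K = 2l$ and $\dim M = 3l+2$, so the codimension is $l+2\ge 3$ and the critical deficiency is $2\dim K - \dim M = l-2$.

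First I would verify the two connectivity hypotheses. The manifold $M$ strongly deformation retracts onto the meridian $\Sigma^l\vee\Sigma^l$, hence is $(l-1)$-connected. The source $K$ is the union of the two mapping cylinders $\Cyl W_s(a)$ and $\Cyl W_2(b)$ glued along $V^{2l-1}_m$. Each cylinder deformation retracts onto its target (a wedge of $l$-spheres, namely $V^l_s$ or $S^l\vee S^l$), so each is $(l-1)$-connected, while the intersection $V^{2l-1}_m$ is $(2l-2)$-connected, which for $l\ge 2$ is $l$-connected. A van Kampen and Mayer-Vietoris computation (all homology of $K$ vanishes below dimension $l$, and $K$ is simply connected) then shows $K$ is $(l-1)$-connected.

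Second, I would apply the relative Zeeman-Irwin theorem: a PL map $f\colon K^k\to M^m$ is PL-homotopic to a PL embedding provided $K$ is $(2k-m)$-connected, $M$ is $(2k-m+1)$-connected, and $k\le m-3$. Substituting $k=2l$ and $m=3l+2$ gives $2k-m=l-2$, $2k-m+1=l-1$, and $m-3=3l-1\ge 2l=k$, so all hypotheses hold. The resulting PL embedding $f'\colon K\to M$ is PL-homotopic to $f$ inside $M$; restricting the ambient homotopy to $S^l\vee S^l\subset K$ yields $f'|_{S^l\vee S^l}\simeq f|_{S^l\vee S^l}$, as required.

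The main obstacle is the sharpness of the numerical match: the available $(l-1)$-connectedness of both $K$ and $M$ exactly meets the $2k-m+1=l-1$ threshold, with no slack. If the cited form of the embedding theorem fell short by one, the fallback would be to grind out piping by hand: put $f$ into general position so that the self-intersection set has dimension $\le l-2$, then inductively eliminate double-point strata by the Whitney-style move along $\le(l-1)$-dimensional bounding disks, which exist in $M$ by $(l-1)$-connectedness and embed there by general position in codimension $l+2\ge 3$. Either route depends crucially on the connectivity computation above, which is the only genuinely paper-specific ingredient.
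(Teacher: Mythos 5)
Your connectivity bookkeeping is correct and is indeed the paper-specific part: $X(a,b)$ is $(l-1)$-connected (Mayer--Vietoris over the two cylinders glued along the $(2l-2)$-connected $V^{2l-1}_m$), the complement $S^{3l+2}-(S^{2l+1}\vee S^{2l+1})$ deformation retracts to $\Sigma^l\vee\Sigma^l$ and so is $(l-1)$-connected, and the numbers $2k-m=l-2$, $k\le m-3$ all check. The gap is the theorem you then invoke. The Zeeman--Irwin Theorem (\cite[Theorem 2.9]{Sk06}; cf. \cite[Theorem 8.3]{Hu69}) requires the source to be a compact PL \emph{manifold}; there is no off-the-shelf ``relative Zeeman--Irwin theorem'' for arbitrary $k$-complexes $K$ under these connectivity hypotheses. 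And $X(a,b)$ is genuinely not a manifold: it is a double mapping cylinder, with non-manifold points along the copy of $V^{2l-1}_m$ where the two cylinders are glued, and at the wedge points. This is exactly what the paper flags in Remark \ref{r:huze}.a: the analogue of Lemma \ref{t:irwstr} with $X(a,b)$ replaced by a $2l$-dimensional $(l-2)$-connected manifold \emph{is} a particular case of Zeeman--Irwin, but for $X(a,b)$ itself one must modify the Zeeman--Irwin \emph{proof}, via a version of engulfing similar to \cite[\S2.3]{Sk98} (cf. \cite[\S2]{We67}); this is the content of \cite[Theorem 10]{FWZ}, whose own proof is delicate (see Remark \ref{r:huze}.cd on the unchecked property $S(g_1)\subset S(g)$). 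So the single step your proposal treats as a citation is in fact the entire content of the lemma.

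The fallback sketch does not close this gap either. After general position the self-intersection set of $f$ has dimension $2\cdot 2l-(3l+2)=l-2$, i.e.\ for $l>2$ it is a positive-dimensional stratified set, not a finite collection of double points; the Whitney trick removes isolated pairs of double points and there is no ``Whitney-style move along $(l-1)$-dimensional bounding disks'' that eliminates an $(l-2)$-dimensional double-point stratum. Removing positive-dimensional self-intersections is precisely what the engulfing/piping machinery in the Zeeman--Irwin and Weber arguments accomplishes, and making it run with a non-manifold source while keeping newly created singularities inside the old ones (the property $S(g_1)\subset S(g)$, which uses the metastable restriction $2(3l+2)\ge 3(2l+1)$) is where all the actual work lies.
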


\jonly{Lemma \ref{t:irwstr} is essentially a restatement of \cite[Theorem 10]{FWZ} accessible to non-specialists. See more detailed historical remark in \cite[Remark 3.9]{Sk20e}.}

%This is weaker than {\it the Zeeman-Irwin Theorem} because $f$ and $f'$ are not required to be homotopic.
%More importantly,

The idea of \cite{FWZ} to prove the implication $(Em)\Rightarrow(Ex')$ is to construct the complex $G$, and use a modification of the following Lemma \ref{l:ramsey}.

%\cite[\S3]{ST17}, \cite[Lemma 1.3]{KS20}

\begin{lemma}[{\cite[Lemma 1.4]{SS92}}]\label{l:ramsey}
For any integers $0\le l<k$ there is a $k$-complex $F_-$ containing subcomplexes $\Sigma^k\cong S^k$ and $\Sigma^l\cong S^l$, PL embeddable into $\R^{k+l+1}$ and such that for any PL
%almost
embedding $f:F_-\to\R^{k+l+1}$ the images $f\Sigma^k$ and $f\Sigma^l$ are linked modulo 2.
\end{lemma}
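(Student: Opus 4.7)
The plan is to use a join-type construction with a Van Kampen / Ramsey-style rigidity argument, modelled on the planar case $k=1$, $l=0$. There one can take $F_-$ to be the triangle $\Sigma^1=\partial\Delta^2$ together with the complete bipartite graph $K_{2,3}$ adjoining the two vertices of $\Sigma^0$ to the three vertices of $\Sigma^1$. This graph is planar, so $F_-$ embeds in $\R^2$; but $K_{2,3}$ does not embed in a closed disk with its three degree-$2$ vertices on the boundary circle, so in any planar embedding the two vertices of $\Sigma^0$ must lie on opposite sides of $\Sigma^1$, forcing $\lk_2=1$.

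For general $0\le l<k$ I would take
$$F_- := \Sigma^k \cup \Sigma^l \cup \bigl(\Sigma^l * (\Sigma^k)^{(0)}\bigr),$$
with $\Sigma^k=\partial\Delta^{k+1}$ and $\Sigma^l=\partial\Delta^{l+1}$ on disjoint vertex sets; the join adjoins, for each vertex $v_j$ of $\Sigma^k$, the cone $D^{l+1}_j:=\Sigma^l*v_j$, an $(l+1)$-disk with $\partial D^{l+1}_j=\Sigma^l$ and apex $v_j$. Since $\dim D^{l+1}_j=l+1\le k$, the complex $F_-$ is $k$-dimensional; for $k=1,l=0$ it recovers the planar prototype. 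PL embeddability in $\R^{k+l+1}$ follows by placing $\Sigma^k,\Sigma^l$ as a standard linked pair and realizing each cone in general position; the dimension count $(l+1)+k=k+l+1$ precisely allows each cone to meet $\Sigma^k$ only at its apex.

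The rigidity step — showing $\lk_2(f\Sigma^k,f\Sigma^l)=1$ for every PL embedding $f$ — is the heart of the proof. Since $\partial f(D^{l+1}_j)=f\Sigma^l$ and $f$ is injective, $f(\Sigma^k)\cap f(D^{l+1}_j)=f(\Sigma^k\cap D^{l+1}_j)=\{f(v_j)\}$, so
$$\lk_2(f\Sigma^k,f\Sigma^l)\equiv\mathrm{ind}_{f(v_j)}\bigl(f\Sigma^k,f(D^{l+1}_j)\bigr)\pmod 2,$$
where the local intersection index at $f(v_j)$ equals the mod-$2$ linking, inside the link sphere $S^{k+l}$ of $f(v_j)$ in $\R^{k+l+1}$, of the images of the links of $v_j$ in $\Sigma^k$ and in $D^{l+1}_j=\Sigma^l$. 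Summing this identity over the $k+2$ apices and invoking a Van Kampen-type non-embeddability lemma for the union of cones forces the parity to be $1$ independently of $f$.

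The main obstacle is precisely this last non-embeddability step. Contrapositively, assuming $\lk_2=0$, one takes an $(l+1)$-chain $C\subset\R^{k+l+1}\setminus f\Sigma^k$ with $\partial C=f\Sigma^l$ and combines $C$ with the cones $f(D^{l+1}_j)$ to produce an embedding of $\Sigma^l*(\Sigma^k)^{(0)}\cup C$ into a punctured ball with $\Sigma^k$ on the boundary sphere — a configuration that must be excluded. For $k=1,l=0$ this is Kuratowski's obstruction to embedding $K_{2,3}$ in a disk with the degree-$2$ vertices on the boundary; for general $k,l$ it requires either a Van Kampen-Flores-type equivariant cohomology computation on the deleted join of $F_-$, or an Alexander duality argument in $\R^{k+l+1}\setminus f\Sigma^k$, and may necessitate refining the construction (using a larger triangulation of $\Sigma^k$ or extra auxiliary cells) to ensure the relevant cohomology class is nonzero.
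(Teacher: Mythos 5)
You cannot be compared against an in-paper argument here: the paper does not prove this lemma, it quotes it from \cite{SS92} (Lemma 1.4), so your attempt stands or falls on its own. Your local-to-global reduction is correct: since $f(D^{l+1}_j)$ meets $f\Sigma^k$ only at the apex, $\lk_2(f\Sigma^k,f\Sigma^l)$ equals the mod 2 linking, in the link sphere $S^{k+l}$ of $f(v_j)$, of the links of $v_j$ in $\Sigma^k$ and in $D^{l+1}_j$. But the complex you construct is too small, and the lemma is \emph{false} for it once $k\ge2$. Concrete counterexample for $(k,l)=(2,0)$: realize $\Sigma^2=\partial\Delta^3$ as the boundary of a tetrahedron in $\R^3$, place both points $a,b$ of $\Sigma^0$ generically in its interior, and join each of them to the four vertices by straight segments. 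This is a PL embedding of your $F_-$ (segments stay in the solid tetrahedron by convexity and meet the boundary only at vertices), yet $a$ and $b$ lie on the same side of the sphere, so $\lk_2=0$. The same works for all $l<k$ with $k\ge 2$: embed $\Sigma^k$ standardly in $\R^{k+1}\subset\R^{k+l+1}$, put a small unlinked copy of $\Sigma^l$ far away, and drag each cone $\Sigma^l * v_j$ as a thin ``spindle'' along pairwise disjoint arcs to $f(v_j)$, leaving the common base along distinct normal directions and finishing with a tiny cone over an $l$-sphere placed in the link $S^{k+l}$ of $f(v_j)$ inside a ball disjoint from the link of $v_j$ in $\Sigma^k$. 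Each cone then meets $\Sigma^k$ only at its apex, with local (hence global) mod 2 linking $0$. Your planar prototype succeeds only because of a two-dimensional crowding obstruction ($K_{2,3}$ in a disk with its degree-2 vertices on the boundary circle) that has no higher-dimensional analogue; this is precisely the phenomenon of the paper's Example \ref{e:link}, where a disk meeting a sphere in exactly one point puts no constraint at all on the linking of its boundary sphere.

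Consequently the ``Van Kampen--Flores-type non-embeddability'' step you defer is not a refinement to be supplied later; it is the entire content of the lemma, and for your $F_-$ the would-be obstruction actually vanishes. (Your summation over the $k+2$ apices also gives nothing when $k$ is even, since then $k+2$ is even and the identity degenerates to $0\equiv 0$.) The construction of \cite{SS92} uses a genuinely larger complex, built from a quasi-embedding of a join-type complex (see also \cite{KS20}), for which a deleted-product/equivariant computation yields a nonvanishing obstruction forcing the mod 2 linking. Your own closing caveat --- that one ``may necessitate refining the construction'' --- is accurate but understated: that refinement is where the whole proof lives.
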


Lemma 30 of \cite{FWZ} is a modification of Lemma \ref{l:ramsey} with `linked modulo 2' replaced by `linked with linking coefficient $\pm1$'.
The proof of \cite[p. 778, end of proof of Lemma 30]{FWZ} used the following incorrect statement: {\it If $f:D^p\to\R^{p+q}$ and $g:S^q\to\R^{p+q}$ are PL embeddings such that $|f(D^p)\cap g(D^q)|=1$, then the linking coefficient of $f|_{S^{p-1}}$ and $g$ is $\pm1$.}

\begin{example}\label{e:link}
For any integers $p,q\ge2$ and $c$ there are PL embeddings $f:D^p\to\R^{p+q}$ and $g:S^q\to\R^{p+q}$ such that
$|f(D^p)\cap g(S^q)|=1$ and the linking coefficient of $f|_{S^{p-1}}$ and $g$ is $c$.
\end{example}

\begin{proof} Take PL embeddings $f_0:S^{p-1}\to\R^{p+q-1}$ and $g_0:S^{q-1}\to\R^{p+q-1}$ whose linking coefficient is $c$.
Take points $A,B\in\R^{p+q}-\R^{p+q-1}$ on both sides of $\R^{p+q-1}$.
Then $f=f_0*A$ and $g=g_0*\{A,B\}$ are the required embeddings.
\end{proof}

The modification \cite[Lemma 30]{FWZ} of
%the Segal-Spie\.z
Lemma \ref{l:ramsey} is presumably incorrect\jonly{, cf. \cite[Theorem 1.6]{KS20}. See more discussion and conjectures in \cite[\S3]{Sk20e}}.

\aronly{

\begin{theorem}[{\cite[Theorem 1.6]{KS20}}]\label{t:ss1ae}
For any integers $1<l<k$ and $z$ there is a PL almost embedding $f:F_-\to\R^{k+l+1}$ such that
the linking coefficients of oriented $f\Sigma^k$ and $f\Sigma^l$ is $2z+1$.
\end{theorem}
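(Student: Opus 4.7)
The plan is to start from a PL embedding $f_0:F_-\to\R^{k+l+1}$, which exists by Lemma \ref{l:ramsey}, and to perform a local surgery on $f_0|_{\Sigma^k}$ that alters the linking coefficient by any prescribed integer. Since Lemma \ref{l:ramsey} guarantees that $L_0:=\lk(f_0\Sigma^k,f_0\Sigma^l)$ is odd, writing $L_0=2z_0+1$ I need only modify $f_0$ so as to add $2(z-z_0)$ to the linking; more generally I aim to realize any integer increment $c$.

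To change the linking by $c$, choose a small PL $k$-ball $B$ in the interior of a top-dimensional simplex $\sigma$ of $\Sigma^k$, with $f_0(B)$ contained in an ambient regular neighborhood disjoint from $f_0(\Sigma^l)$. Pick a PL arc $\alpha$ in $\R^{k+l+1}$ joining a point of $f_0(B)$ to a meridional $(l+1)$-ball $\mu$ of $f_0(\Sigma^l)$. Replace $f_0|_B$ by a PL map $g:B\to\R^{k+l+1}$, agreeing with $f_0$ on $\partial B$, obtained by extruding a thin ``finger'' from the interior of $f_0(B)$ that follows $\alpha$, wraps $c$ times around $\mu$, and retraces its path back; by construction $g(B)\cap f_0(\Sigma^l)=\emptyset$. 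Set $f=f_0$ on $F_-\setminus\Int B$ and $f=g$ on $B$, so that $f(\Sigma^k)\cap f(\Sigma^l)=\emptyset$. The $k$-cycle $g(B)-f_0|_B$ lies in $\R^{k+l+1}\setminus f_0(\Sigma^l)$ and, by the wrapping construction, represents $c$ times the meridian generator of $H_k(\R^{k+l+1}\setminus f_0(\Sigma^l))\cong\Z$; hence $\lk(f\Sigma^k,f\Sigma^l)=L_0+c$, as required.

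The main obstacle is to ensure that $f$ is a PL almost embedding of the entire complex $F_-$, not merely of the pair $\Sigma^k\sqcup\Sigma^l$. For a top-dimensional simplex $\tau$ of $F_-$ sharing no vertex with $\sigma$, the expected generic dimension of $g(B)\cap f_0(\tau)$ is $2k-(k+l+1)=k-l-1$, which is nonnegative in the range $l\le k-1$, so such intersections cannot be removed by a small perturbation of $g$ alone. I would handle this by confining $\alpha$ and the wrapping portion of $g$ to a thin tubular neighborhood $N$ of $\alpha\cup\mu$, chosen so that $N\cap f_0(F_-\setminus(\sigma\cup\Sigma^l))=\emptyset$. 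This is where the hypothesis $l>1$ enters: $f_0(F_-\setminus(\sigma\cup\Sigma^l))$ has ambient codimension at least $l+1\ge 3$ in $\R^{k+l+1}$, so by PL general position a generic arc $\alpha$, together with a sufficiently thin tubular neighborhood, misses this subcomplex; performing the wrapping inside $N$ keeps $g(B)$ inside $\R^{k+l+1}\setminus f_0(F_-\setminus\Sigma^l)$. Verifying this general-position step in detail, and checking that the linking class is preserved under the small ambient isotopies used to realize $N$, is the delicate part of the argument.
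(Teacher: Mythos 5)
Note first that the paper does not prove Theorem \ref{t:ss1ae}: it is quoted from \cite[Theorem 1.6]{KS20}. So your argument must stand on its own, and it has a genuine gap, located exactly at the step you yourself call delicate.

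The decisive objection is that your construction proves too much. The mod~2 counting argument of \cite{SS92} behind Lemma \ref{l:ramsey} uses only that images of disjoint simplices are disjoint, so it applies verbatim to almost embeddings: every PL almost embedding $f:F_-\to\R^{k+l+1}$ has $\lk(f\Sigma^k,f\Sigma^l)$ odd. (This is precisely why Theorem \ref{t:ss1ae} realizes the values $2z+1$ and not arbitrary integers.) You state that your local surgery realizes \emph{any} integer increment $c$ while keeping $f$ an almost embedding; for odd $c$ this would yield an almost embedding with even linking coefficient, a contradiction. Hence the construction must fail somewhere, and it fails in the confinement claim $N\cap f_0\bigl(F_-\setminus(\sigma\cup\Sigma^l)\bigr)=\emptyset$, which no general position argument can deliver.

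Concretely: first, the object to wrap around is the meridian $k$-sphere of $f_0(\Sigma^l)$, i.e.\ the boundary of a normal $(k+1)$-ball at a point $p\in f_0(\Sigma^l)$, not an ``meridional $(l+1)$-ball'' $\mu$ --- a $k$-cycle running around an $(l+1)$-ball is null-homologous in $\R^{k+l+1}-f_0(\Sigma^l)$ and changes no linking. Second, and decisively, the wrapping must take place inside a neighborhood of such a point $p$, and every neighborhood of every point of $f_0(\Sigma^l)$ meets the $f_0$-images of the simplices of $F_-$ having the $l$-simplex $\lambda\subset\Sigma^l$ through $f_0^{-1}(p)$ as a proper face. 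For a $j$-simplex $\tau\supsetneq\lambda$ the intersection of $f_0(\tau)$ with the wrapped $k$-dimensional finger has stable dimension $j-l-1\ge0$: e.g.\ an $(l+1)$-simplex attached along $\lambda$ crosses each of the $c$ wraps in one point, with the same sign for every wrap, so these points do not cancel even when $c$ is even. Such $\tau$ is disjoint from $\sigma$, so these are exactly the forbidden intersections, and they cannot be removed by thinning $N$ or by a small isotopy (the dimensions are complementary). Note also that such attached simplices must exist along \emph{every} $l$-simplex of $\Sigma^l$: if some $\lambda$ were a maximal simplex of $F_-$, your move performed at an interior point of $f_0(\lambda)$ would indeed go through for all $c$, contradicting the parity above. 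Your codimension-$(l+1)\ge3$ general position handles only the $1$-dimensional arc $\alpha$, not the wrapping locus; arranging the wrapped finger to be compatible with the almost-embedding condition --- by exploiting the actual combinatorics of $F_-$ so that all forced crossings occur on simplices meeting $\sigma$, or by building $f$ globally by a different method, e.g.\ propagating a low-dimensional example upward by joins in the spirit of Example \ref{e:link} --- is the entire content of \cite[Theorem 1.6]{KS20}, and it is missing from the proposal.
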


\begin{remark}\label{r:huze} (a) Lemma \ref{t:irwstr} is essentially a restatement of \cite[Theorem 10]{FWZ} accessible to non-specialists.
Analogous lemma for $X(a,b)$ replaced by $2l$-dimensional $(l-2)$-connected manifold is (a particular case of) the Zeeman-Irwin Theorem.
%This does not follow from the Zeeman-Irwin Theorem because $X$ is not a manifold.
The required modification of the Zeeman-Irwin proof is not hard.
It is based on a version of engulfing similar to \cite[\S2.3]{Sk98} (such a version was possibly suggested by C.  Zeeman to C. Weber \cite[\S2, the paragraph before remark 1]{We67}).

(b) Proposition 34 of \cite{FWZ} is a detailed general position argument for the following
statement: {\it If $Z$ is a subcomplex of a complex $X$
%, is a regular neighborhood of $Z\times0$ in $X$
and $2\dim Z<d$, then any PL map of $X$ to a PL $d$-manifold is homotopic to a PL map the closure of whose self-intersection set misses $Z$.}
(This should be known, at least in folklore, but I do not immediately see a reference.)

(c) Lemma 41 of \cite{FWZ} is a version of the following theorem: {\it Any PL map of $S^n\times I$ to an $(2n+3-m)$-connected $m$-manifold $Q$ is homotopic to a PL embedding}
(this is a particular case of \cite[Theorem 8.3]{Hu69}).
%, but does not one needs there high connectedness of $Q$?
The novelty of \cite[Lemma 41]{FWZ} is the property $S(g_1)\subset S(g)$.
This property is not checked in \cite[proof Lemma 41]{FWZ} but does follow from
$C\cap g(\Cl(A\times[0,1]-\sigma))=g(\widetilde I)$; the latter holds
because of the `metastable dimension restriction' $2(3l+2)\ge 3(2l+1)$.

(d) In the proof of \cite[Lemma 42]{FWZ} the property $S(g_1)\subset S(g)$ is not checked.
This property ensures that we can make new improvements without destroying the older ones.
Cf. \cite[line 5 after the display formula in p. 2468]{Sk98}.
This property presumably holds because of the `metastable dimension restriction' $2(3l+2)\ge 3(2l+1)$.
\end{remark}

A PL map $g:K\to\R^d$ of a complex $K$ is called an {\bf almost embedding} if $g\alpha\cap g\beta=\emptyset$ for any two disjoint simplices $\alpha,\beta\subset K$.

\begin{conjecture}[almost embeddability is undecidable]\label{t:aundec}
For every fixed $d,k$ such that

(a) $5\le d\in\{k,k+1\}$; \quad (b) $8\le d\le \frac{3k+1}2$

there is no algorithm recognizing almost embeddability of $k$-complexes in $\R^d$.
\end{conjecture}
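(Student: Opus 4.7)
The plan is to reduce known undecidable problems to almost embeddability in each codimension range, adapting the reductions already used for honest embeddability in Theorem~\ref{t:undec1} and Conjecture~\ref{t:undec}.

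For part (a), I would use the Matou\v sek--Tancer--Wagner construction from \cite{MTW} essentially verbatim. They associate to each candidate sphere $S$ a $k$-complex $K_S$ that PL embeds in $\R^d$ iff $S\cong S^d$, and then invoke Novikov unrecognizability of $S^d$. The task is to verify that this equivalence persists with `PL embeds' replaced by `PL almost embeds'. The forward direction is trivial. For the converse, one exploits that in codimension $\le 1$ the obstruction used in \cite{MTW} is a $\Z/2$-homological property of the complement of a codimension-$\le 1$ subpolyhedron --- something already detected by any almost embedding; equivalently, an almost embedding of $K_S$ for $d\in\{k,k+1\}$ can be promoted to a genuine embedding by a small general-position perturbation in these low codimensions.

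For part (b), reduce to the extreme case $2d=3k+1=6l+4$ (so $k=2l+1$, $d=3l+2$), as in \cite{FWZ}; I describe the case $l$ odd, the case $l$ even being analogous with (SYM) in place of (SKEW). From data $(a,b)$ with all $a^{i,j}_q$ even, form the double cylinder $X(a,b):=\Cyl W_s(a)\cup_{V^{2l-1}_m}\Cyl W_2(b)$ and glue in a copy of the Segal--Spie\.z complex $G$ of Lemma~\ref{l:ramsey} along $S^l\vee S^l\subset\Cyl W_2(b)$. The goal is to prove the almost-embedding analogue of Conjecture~\ref{t:ckmvwz}: $X(a,b)\cup_{S^l\vee S^l}G$ almost embeds in $S^{3l+2}$ iff (SKEW) holds. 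One direction follows from the chain (SKEW)$\Rightarrow$(LD)$\Rightarrow$(Ex)$\Rightarrow$(Ex') together with Lemma~\ref{t:irwstr}, producing an honest embedding. For the converse, given an almost embedding $f$, apply Lemma~\ref{l:ramsey} to each wedge summand of $G$ to conclude that $f|_{S^l\vee S^l}$ is mod-2 linked with a standard $S^{2l+1}\vee S^{2l+1}\subset S^{3l+2}$; a small piping along arcs in $X(a,b)$ pushes $f(X(a,b))$ off this wedge, and composing with the strong deformation retraction $S^{3l+2}-(S^{2l+1}\vee S^{2l+1})\to\Sigma^l\vee\Sigma^l\cong Y_l$ exhibits a PL extension of $\beta_l(b)$ over $\Cyl W_s(a)$. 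Propositions~\ref{p:ld} and~\ref{p:cylnew}.a then furnish the required (SKEW)-solution.

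The main obstacle is the mod-2 gap: Lemma~\ref{l:ramsey} detects only mod-2 linking, so a priori the argument recovers only (SKEW) mod 2. This is precisely repaired by the hypothesis `all $a^{i,j}_q$ even' (working with (SKEW') rather than (SKEW)), which is the strengthening already used in Lemma~\ref{l:solv}.b'. A subtler technical point is the piping move: it must push $f(X(a,b))$ off the standard $S^{2l+1}\vee S^{2l+1}$ without changing the homotopy class of $f|_{S^l\vee S^l}$, and although the metastable-type inequality $2(3l+2)\ge 3(2l+1)+1$ makes this plausible by analogy with the arguments sketched in Remark~\ref{r:huze}(c,d), verifying that the general-position control required is consistent with the almost-embedding hypothesis (as opposed to the arbitrary-PL-map hypothesis of ordinary engulfing) is the step I expect to be most delicate.
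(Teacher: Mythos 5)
You are attempting to prove a statement that the paper deliberately leaves as a \emph{conjecture}, and your plan founders exactly on the gap that keeps it one. The fatal step is the converse direction in part (b). From an almost embedding, Lemma~\ref{l:ramsey} gives you only \emph{mod-2} linking information: the composition of $f|_{S^l\vee S^l}$ with the retraction $S^{3l+2}-(S^{2l+1}\vee S^{2l+1})\to\Sigma^l\vee\Sigma^l$ then has degree matrix with odd diagonal and even off-diagonal entries, but these entries are otherwise arbitrary integers. Running this through Theorem~\ref{t:himi} and Propositions~\ref{p:ld}, \ref{p:cylnew} you therefore recover only $\sum_{i<j}a^{i,j}_q(x_iy_j-x_jy_i)=(2z+1)b_q$, i.e.\ the property (SKEW1) of Conjecture~\ref{c:solodd}.a, not (SKEW). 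Your proposed repair --- the hypothesis that all $a^{i,j}_q$ are even --- addresses an entirely different issue: it neutralizes the 2-torsion coming from $[\inc,\inc]$ and $[\incj,\incj]$ in $\pi_{2l-1}(S^l\vee S^l)$ (Theorem~\ref{t:himi}.b2, Lemma~\ref{l:solv}.b') and does nothing about the odd multiplier $(2z+1)$ on $b_q$. Nor is this a removable technicality: Theorem~\ref{t:ss1ae} shows that the Segal--Spie\.z complex admits almost embeddings realizing \emph{every} odd linking coefficient $2z+1$, so mod-2 detection is the best possible under the almost-embedding hypothesis. This is precisely why the paper reduces the extreme case of Conjecture~\ref{t:aundec} to Proposition~\ref{t:ckmvwzp} and Conjecture~\ref{t:ckmvwzg} (whose implications $(Em1)\Rightarrow(Ex'1)$, $(Em2)\Rightarrow(Ex'2)$ themselves rest on Theorem~\ref{t:ss1ae}, a conjecture of \cite[Remark 1.7.b]{KS20}, and a conjectural wedge analogue) \emph{together with} Conjecture~\ref{c:solodd}; and by Remark~\ref{r:equiv}, the undecidability of (SKEW1) is equivalent, after work of Moroz, Kogan and J.~Robinson, to undecidability of Diophantine equations over the rationals --- a well-known open problem. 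A completed version of your plan would settle that problem.

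Part (a) contains an independent error: in codimension $d-k\in\{0,1\}$ you cannot ``promote an almost embedding to a genuine embedding by a small general-position perturbation.'' General position separates images of disjoint simplices of dimensions $p,q$ only when $p+q<d$, which fails badly for $d\in\{k,k+1\}$; in low codimension almost embeddability is genuinely weaker than embeddability, and the paper gives no argument for (a) either --- it is part of the conjecture, not something the \cite{MTW} reduction is known to survive. Your ``piping'' step in (b) has a related weakness: pushing $f(X(a,b))$ off $S^{2l+1}\vee S^{2l+1}$ while controlling the homotopy class of the restriction is exactly the kind of engulfing-type control the paper flags as delicate even for honest maps (Remark~\ref{r:huze}), and you give no reason the almost-embedding hypothesis suffices for it. In short: your forward direction (SKEW)$\Rightarrow$(Em) via Lemma~\ref{t:irwstr} is fine, but the converse provably yields only (SKEW1), whose undecidability is open, so the proposal cannot be completed as stated.
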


%We work in the PL category which we omit.
%An {\it edge} is a 2-element subsets of $V$ contained in some face.

%Putting the above ideas together and proving the equivalence $(LD)\Leftrightarrow(v)$
%is an important achievement of Filakovsk\'y-Wagner-Zhechev.
%The second improvement is more complicated and requires an invariant analogous to \cite{S07}.

Conjecture \ref{t:aundec} easily follows from its `extreme' case $2d=3k+1=6l+4$ analogously to
\cite[Corollaries 4 and 6]{FWZ}.
The extreme case for $l$ even is implied by the equivalence $(SYM1)\Leftrightarrow(Em1)$ of the following Conjectures \ref{c:solodd}.b and Proposition \ref{t:ckmvwzp}.
The extreme case for any $l$ is implied by the equivalence $(SKEW1)\Leftrightarrow(Em2)$ of the following Conjectures \ref{c:solodd}.a and \ref{t:ckmvwzg}.

%Let us explain how Theorem  \ref{t:ss1ae} and the following Conjecture \ref{c:solodd}.b
%(which is presumably open)
%imply Conjecture \ref{t:aundec}.b for the `extreme' case $2d=3k+1=6l+4$, $l$ even.

\begin{conjecture}
%[cf. {\cite[Lemma 2.1]{CKM+}}]
\label{c:solodd} (a) For some fixed integers $m,s$ there is no algorithm which for given arrays $a=(a^{i,j}_q)$, $1\le i<j\le s$, $1\le q\le m$ and $b=(b_1,\ldots,b_m)$ of integers
decides whether

\quad (SKEW1) there are integers $x_1,\ldots,x_s,y_1,\ldots,y_s,z$ such that
$$\sum\limits_{1\le i<j\le s}a^{i,j}_q(x_iy_j-x_jy_i)=(2z+1)b_q,\quad 1\le q\le m.$$

(b) For some fixed integers $m,s$ there is no algorithm which for given arrays $a=(a^{i,j}_q)$, $1\le i<j\le s$, $1\le q\le m$, and $b=(b_1,\ldots,b_m)$ of integers decides whether

\quad (SYM1) there are integers $x_1,\ldots,x_s,z$ such that
$$\sum\limits_{1\le i<j\le s}a^{i,j}_qx_ix_j=(2z+1)b_q,\quad 1\le q\le m.$$
\end{conjecture}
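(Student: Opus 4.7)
The plan is to adapt the reduction from Hilbert's tenth problem used for Lemma \ref{l:solv} (see \cite[\S2]{CKM+}), with additional ``gauge'' variables to manage the common odd scalar $2z+1$. First I would reformulate (SKEW1) for $(a,b)$ as asking for an odd $\lambda\in\Z$ and $x,y\in\Z^s$ with $\sum_{i<j}a^{i,j}_q(x_iy_j-x_jy_i)=\lambda b_q$ for every $q$, and (SYM1) analogously with $x_ix_j$. The direction (SKEW)$\Rightarrow$(SKEW1) for the same data is immediate (take $z=0$). The nontrivial task is to design instances $(a',b')$ of (SKEW1) for which the extra freedom in choosing $\lambda$ does not create spurious solvability.

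For the forward construction I would enlarge the system with a pair of new variable pairs $x_{s+1},y_{s+1}$ and $x_{s+2},y_{s+2}$ and adjoin an equation with $b_{m+1}=1$ and $a^{s+1,s+2}_{m+1}=1$, i.e.\ $x_{s+1}y_{s+2}-x_{s+2}y_{s+1}=2z+1$; this ties $2z+1$ to a new bilinear quantity $\omega_{s+1,s+2}$. I would then rewrite each original equation in the homogenized form
\[
\sum_{1\le i<j\le s}a^{i,j}_q(x_iy_j-x_jy_i)-b_q(x_{s+1}y_{s+2}-x_{s+2}y_{s+1})=0,
\]
i.e.\ set $b'_q=0$ for $q\le m$. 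The resulting instance has the property that $(x,y,z)$ is a solution iff the original $(x,y)$ satisfies $\sum a^{i,j}_q\omega_{i,j}=\lambda b_q$ where $\lambda=\omega_{s+1,s+2}$ is an arbitrary odd integer. The analogous construction for (SYM1) uses gauge variables $x_{s+1},x_{s+2}$ and the relation $x_{s+1}x_{s+2}=2z+1$.

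The main obstacle is that this gauge construction only reduces to solvability of $(a,\lambda b)$ for \emph{some} odd $\lambda$, not to solvability for $b$ itself. To overcome it, I would not reduce from (SKEW)/(SYM) but rather redo the Hilbert--tenth reduction of \cite[\S2]{CKM+} starting from \emph{homogeneous} Diophantine equations, for which scaling an integer solution by any nonzero integer preserves solvability; Hilbert's tenth problem remains undecidable in this class provided a pivot variable is fixed to $\pm1$. The pivot can be enforced in (SKEW1)/(SYM1) by adding further gauge equations that express a second bilinear (respectively quadratic) quantity as $2z+1$, so that the two ``copies'' of $2z+1$ appearing on the right-hand sides of these extra equations force the common odd scalar to divide a bounded quantity, hence to equal $\pm 1$. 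Verifying that the resulting system remains in the required form of (SKEW1) or (SYM1), and that the pivot constraint can indeed be enforced through bilinear/quadratic equations alone, is the technical core that keeps the statement at the level of a conjecture.
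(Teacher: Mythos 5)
The statement you set out to prove is labelled a \emph{conjecture} in the paper, and deliberately so: the paper offers no proof of Conjecture~\ref{c:solodd}, and Remark~\ref{r:equiv} records that part~(a) is equivalent (B.~Moroz, E.~Kogan, via J.~Robinson's Diophantine characterization of the rationals with odd denominators, plus the sum-of-squares trick) to statement~(**): the undecidability of Hilbert's tenth problem over $\Q$, which is a well-known open problem. So the caveat in your final sentence is not a technical verification left for later --- it is the entire content, and any completed argument along your lines would settle that open problem. This is in sharp contrast with Lemma~\ref{l:solv}, where the paper does have a proof (reduction from Hilbert's tenth problem over $\Z$, following \cite[\S2]{CKM+}): there the right-hand sides are fixed integers $b_q$, with no scaling freedom.

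Concretely, two steps of your plan fail. First, the gauge construction is circular: adjoining $x_{s+1}y_{s+2}-x_{s+2}y_{s+1}=2z+1$ and homogenizing the remaining equations produces a system whose solvability is, as you yourself note, exactly ``$\sum a^{i,j}_q(x_iy_j-x_jy_i)=\lambda b_q$ for some odd $\lambda$'' --- i.e.\ it reduces (SKEW1) to itself, not (SKEW) to (SKEW1); and (SKEW)$\Rightarrow$(SKEW1) with $z=0$ is not a reduction of decision problems, since (SKEW1) may hold with $\lambda=3$ while (SKEW) fails. Second, and decisively, the pivot constraint $\lambda=\pm1$ \emph{cannot} be enforced by any further equations of the (SKEW1)/(SYM1) shape: every such system is invariant under the rescaling $x\mapsto tx$ with $t$ odd, which multiplies each $x_iy_j-x_jy_i$ (respectively each $x_ix_j$, by $t^2$) by an odd factor and replaces $2z+1$ by $t(2z+1)$ (respectively $t^2(2z+1)$), still of the form $2z'+1$. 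So the solvable instances form a set closed under odd scaling of solutions, and no collection of bilinear or quadratic side conditions with right-hand sides of the form $(2z+1)b$ can pin the scalar to $\pm1$ or ``bound'' it; this invariance is precisely what makes the problem equivalent to solvability in rationals with odd denominators (clear denominators by an odd common multiple) rather than in $\Z$, i.e.\ precisely what pushes it into the open territory of~(**). Your proposed fallback --- restarting from homogeneous Diophantine systems with a pivot fixed to $\pm1$ --- runs into the same wall: fixing the pivot de-homogenizes the system (recovering ordinary H10, but in a form (SKEW1) cannot express), while leaving it free makes homogeneous solvability over $\Z$ equivalent to solvability over $\Q$, which is again the open problem.
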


\begin{remark}\label{r:equiv}
B. Moroz conjectured and E. Kogan sketched a proof that Conjecture \ref{c:solodd}.a is equivalent to:

{\it (*) for some fixed positive integers $m,s$ there is no algorithm which for a given system
of $m$ Diophantine equations in $s$ variables decides whether the system has a solution in rational numbers with odd denominators.}

Since $m$ equations are equivalent to 1 equation (sum of squares) and since work of
%Julia
J. Robinson characterizes the rational numbers with odd denominators among all rational numbers in a Diophantine way, (*) is in turn is equivalent to:

{\it (**) for some fixed positive integer $s$ there is no algorithm which for a given
  polynomial equation with integer coefficients in $s$ variables
  decides whether the system has a solution in rational numbers.}

The statement (**) is an open problem.
\end{remark}

An {\bf odd (almost) embedding} is a PL (almost) embedding $f:S^l\to S^{3l+2}-S^{2l+1}$ such that $f(S^l)$ is linked modulo 2 with $S^{2l+1}$.

\begin{proposition}\label{t:ckmvwzp} For any even $l$ there is a $(2l+1)$-complex $G_1\supset S^l$ such that any of the following properties is equivalent to (SYM1):

(Ex1) some odd almost embedding extends to a PL map of $X(a,b)$.

(Ex'1) some odd almost embedding extends to a PL embedding of $X(a,b)$.

(Em1) $X(a,b)\cup_{S^l}G_1$ embeds into $S^{3l+2}$.
\end{proposition}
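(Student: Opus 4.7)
The plan is to mirror the scheme used for Conjecture \ref{t:ckmvwz} but to work modulo 2 throughout; this sidesteps the gap identified in Example \ref{e:link}, since Lemma \ref{l:ramsey} is correct as stated while its $\pm 1$-linking refinement used in \cite{FWZ} is not. I construct $G_1$ by taking the $(2l+1)$-complex $F_-$ from Lemma \ref{l:ramsey} (with $k=2l+1$ and the given $l$), which contains subcomplexes $\Sigma^{2l+1}\cong S^{2l+1}$ and $\Sigma^l\cong S^l$, and identifying $\Sigma^l\subset F_-$ with the sphere $S^l\subset X(a,b)$ (in the even case $X(a,b)=\Cyl W_s(a)\cup_{V^{2l-1}_m}\Cyl W(b)$, so $S^l$ is the target of $W(b)$). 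Then $G_1=F_-$ as a complex and $G_1\supset S^l$.

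For the equivalence (Ex1) $\Leftrightarrow$ (SYM1), I repeat the argument of Proposition \ref{p:ld} with $Y:=S^{3l+2}-S^{2l+1}$ in place of $Y_l$. Since $Y$ PL-deformation-retracts onto a meridian $\Sigma^l\cong S^l$, an odd PL map $f:S^l\to Y$ corresponds under the retraction $r$ to a self-map $r\circ f:S^l\to S^l$ of odd degree. By Proposition \ref{p:cylnew}.a applied to the two cylinders in $X(a,b)$, extendability of $f$ to $X(a,b)\to Y$ is equivalent to the existence of $\varkappa:V^l_s\to Y$ with $\varkappa\circ W_s(a)\simeq f\circ W(b)$. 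Composing with $r$, using the degree and Hopf-invariant calculus from Theorem \ref{t:himi}, and then applying the arithmetic reductions of the type described in Remark \ref{r:equiv} yields the Diophantine system (SYM1).

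The implication (Ex'1) $\Rightarrow$ (Ex1) is trivial; (Ex1) $\Rightarrow$ (Ex'1) is the single-sphere variant of the Zeeman-Irwin-style Lemma \ref{t:irwstr}. For (Em1) $\Rightarrow$ (Ex'1), given a PL embedding $\Phi\colon X(a,b)\cup_{S^l}G_1\hookrightarrow S^{3l+2}$, set $S^{2l+1}:=\Phi(\Sigma^{2l+1})$; Lemma \ref{l:ramsey} applied to $\Phi|_{F_-}$ forces the restriction $\Phi|_{S^l}$ to be mod-2 linked with $S^{2l+1}$, so $\Phi|_{X(a,b)}$ is an odd PL embedding of $X(a,b)$ into $S^{3l+2}-S^{2l+1}$. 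For (Ex'1) $\Rightarrow$ (Em1), I glue the given embedding of $X(a,b)\hookrightarrow S^{3l+2}-S^{2l+1}$ to an embedding of $F_-$ supplied by Lemma \ref{l:ramsey}, using an ambient PL isotopy of $S^{3l+2}-S^{2l+1}$ to match the two copies of $S^l=\Sigma^l$; this isotopy exists because PL embeddings of $S^l$ in codimension $l+1\ge 3$ are unknotted.

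The main obstacle is the precise reduction in (Ex1) $\Leftrightarrow$ (SYM1): the direct Hopf-invariant computation gives $Q_x(a_q)=d^2 b_q$ for some odd $d$, which is only the subsystem of (SYM1) restricted to odd-square multipliers, so bridging this to arbitrary odd multipliers as in (SYM1) requires the number-theoretic maneuvers alluded to in Remark \ref{r:equiv}. A secondary obstacle is the gluing step in (Ex'1) $\Rightarrow$ (Em1), where one must verify that the ambient isotopy matching the two $S^l$-copies can be extended without disturbing the embedded $X(a,b)$ or $F_-$; this relies on a regular-neighborhood argument together with the metastable dimension bound $2(3l+2)\ge 3(2l+1)+1$.
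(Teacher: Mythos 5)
Your skeleton matches the paper's (the paper's own proof is only a two-sentence sketch: all implications except one ``analogously to the correct implications of Conjecture \ref{t:ckmvwz}'', the remaining one by Theorem \ref{t:ss1ae}), and your reductions of (Ex1) to the factorization $\varkappa\circ W_s(a)\simeq f\circ W(b)$ and of (Em1)$\Rightarrow$(Ex'1) to Lemma \ref{l:ramsey} (you should add Zeeman unknotting of $\Phi(\Sigma^{2l+1})$, codimension $l+1\ge3$, so that the complement becomes that of the standard $S^{2l+1}$) are sound. But two of your steps have genuine gaps. The first is the one you yourself call the ``main obstacle'', and it cannot be deferred to Remark \ref{r:equiv}: that remark re-encodes solvability in rationals with odd denominators and ends at an \emph{open} problem; it contains no device converting the odd-square system into (SYM1). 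Your computation is correct --- post-composition with $f$, where $\deg(r\circ f)=d$, sends $W(b_q)=[\widehat{b_q},\widehat 1]$ to $[\widehat{db_q},\widehat d]=W(d^2b_q)$, quadratically in $d$, unlike the odd-$l$ wedge case of Conjecture \ref{t:ckmvwzg} where the multiplier is the determinant of the degree matrix and hence an arbitrary odd number --- but the two Diophantine conditions are inequivalent \emph{instance-wise}: for $s=m=3$, forms $x_1x_2,\ x_1x_3,\ x_2x_3$ and $b=(1,1,3)$, the system $Q_x(a_q)=(2z+1)b_q$ is solved by $x=(1,3,3)$, $2z+1=3$, while $Q_x(a_q)=d^2b_q$ with $d$ odd forces $x_1\ne0$, hence $x_2=x_3$ and $x_2^2=3d^2$, which is impossible. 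So no ``number-theoretic maneuver'' can close your gap; what your argument actually establishes is the equivalence of (Ex1) with the system $Q_x(a_q)=(2z+1)^2b_q$, and a proof (or restatement) of the proposition must confront this squared multiplier head-on --- a point on which the paper's sketch is silent.

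The second gap is your gluing in (Ex'1)$\Rightarrow$(Em1). An ambient PL isotopy \emph{of $S^{3l+2}-S^{2l+1}$} preserves the integer linking coefficient with $S^{2l+1}$; the given odd almost embedding $f$ may have any odd linking coefficient $2z+1$, while the embedding of $F_-$ supplied by Lemma \ref{l:ramsey} has one fixed value, so in general no isotopy in the complement matches the two copies of $S^l$, and unknottedness of $S^l$ in codimension $\ge3$ is irrelevant to this obstruction. What is needed is an (almost) embedding of $F_-$ realizing a \emph{prescribed} odd linking coefficient: this is exactly Theorem \ref{t:ss1ae} for almost embeddings --- the one ingredient the paper's proof names and which is entirely absent from your write-up --- and for honest embeddings it is only the conjecture of \cite[Remark 1.7.b]{KS20}, which is why the paper states the embedding analogue conditionally; your claim to obtain the embedding version outright would resolve that open conjecture and is a red flag. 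Finally, even after the $S^l$'s are matched, keeping $G_1$ off $X(a,b)$ is not a regular-neighbourhood matter: the dimension excess is $(2l+1)+2l-(3l+2)=l-1\ge1$, so general position leaves positive-dimensional intersections, whose removal rel $f(S^l)$ requires the engulfing-type arguments of \cite{FWZ} discussed in Remark \ref{r:huze}, not the one-line appeal to the bound $2(3l+2)\ge 3(2l+1)+1$.
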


All the implications except $(Em1)\Rightarrow(Ex'1)$ (and their analogues for `almost embedding' replaced by
`embedding') are proved analogously to the corresponding correct implications of Conjecture \ref{t:ckmvwz}.
The implication $(Em1)\Rightarrow(Ex'1)$ (and its analogue) follows by Theorem \ref{t:ss1ae}
(by the conjecture in \cite[Remark 1.7.b]{KS20})
%\ref{t:ss1}.b
analogously to \cite{FWZ}.

%As $G'$ one can try to take $F_-$ or $F/\sim$,

An {\bf odd (almost) embedding} is a PL (almost) embedding
$f:S^l_1\vee S^l_2\to S^{3l+2}-S^{2l+1}_1\vee S^{2l+1}_2$ such that the mod 2 linking coefficient of $f(S^l_i)$ and $S^{2l+1}_j$ equals to the Kronecker delta $\delta_{i,j}$.

\begin{conjecture}\label{t:ckmvwzg} For any odd $l>1$ and all $a^{i,j}_q$ even there is a $(2l+1)$-complex $G_2\supset S^l\vee S^l$ such that any of the following properties is equivalent to (SKEW1):

(Ex2) some odd almost embedding extends to a PL map of $X(a,b)$.

(Ex'2) some odd almost embedding extends to a PL embedding of $X(a,b)$.

(Em2) $X(a,b)\cup_{S^l\vee S^l}G_2$ embeds into $S^{3l+2}$.
\end{conjecture}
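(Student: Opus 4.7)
The plan is to parallel the argument outlined for Proposition \ref{t:ckmvwzp}, the even $l$ case, replacing the single sphere $S^l$ throughout by the wedge $S^l \vee S^l$ and $S^{2l+1}$ by $S^{2l+1}_1 \vee S^{2l+1}_2$. The goal is the chain of equivalences $(\text{SKEW1}) \Leftrightarrow (Ex2) \Leftrightarrow (Ex'2) \Leftrightarrow (Em2)$. Only the direction $(Em2) \Rightarrow (Ex'2)$ is genuinely new; the others are analogues of the correct implications of Conjecture \ref{t:ckmvwz}.

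For $(Ex2) \Leftrightarrow (\text{SKEW1})$, I would exploit the strong deformation retraction $S^{3l+2} - (S^{2l+1}_1 \vee S^{2l+1}_2) \to \Sigma^l \vee \Sigma^l \cong Y_l$. Composing an odd PL extension $X(a,b) \to S^{3l+2} - (S^{2l+1}_1 \vee S^{2l+1}_2)$ with this retraction produces a PL map $X(a,b) \to Y_l$ whose restriction to $S^l \vee S^l \subset X(a,b)$ is an odd self-map, in the sense that the $2 \times 2$ matrix of degrees of its two components onto the two wedge summands has odd diagonal and even off-diagonal entries. Propositions \ref{p:ld} and \ref{p:cylnew}.a then translate extendability across the double cylinder $X(a,b)$ into the system $\sum a^{i,j}_q(x_i y_j - x_j y_i) = (2z+1) b_q$, which is exactly (SKEW1) after absorbing the odd determinant $2z+1$ of the self-map matrix into the right-hand side; the reverse direction reads this backward. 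The implication $(Ex2) \Rightarrow (Ex'2)$ is then the Zeeman-Irwin type Lemma \ref{t:irwstr}.

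I would construct $G_2$ by taking two copies $F_-^{(1)}, F_-^{(2)}$ of the Ramsey complex from Lemma \ref{l:ramsey} with parameters $k = 2l+1$, each equipped with distinguished subcomplexes $\Sigma^{2l+1}_j \cong S^{2l+1}$ and $\Sigma^l_j \cong S^l$, and setting $G_2 := F_-^{(1)} \vee F_-^{(2)}$, the wedge vertex being chosen so that the $\Sigma^l_j$ assemble into $S^l \vee S^l \subset G_2$. For $(Ex'2) \Rightarrow (Em2)$, given an odd PL embedding of $X(a,b)$ into $S^{3l+2} - (S^{2l+1}_1 \vee S^{2l+1}_2)$, I would embed each $F_-^{(j)}$ inside a small regular neighborhood of $S^{2l+1}_j$ with its distinguished $\Sigma^{2l+1}_j$ sent to a parallel copy of $S^{2l+1}_j$; a disjointness argument makes these attachments compatible with the given embedding of $X(a,b)$, delivering a global embedding of $X(a,b) \cup_{S^l \vee S^l} G_2$.

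The main obstacle is $(Em2) \Rightarrow (Ex'2)$, the direction where \cite[Lemma 30]{FWZ} failed on account of Example \ref{e:link}. Given a global PL embedding $h$ of $X(a,b) \cup_{S^l \vee S^l} G_2$, the Ramsey property of each $F_-^{(j)}$ forces the mod-$2$ linking coefficient of $h(\Sigma^l_j)$ with $h(\Sigma^{2l+1}_j)$ to equal $1$, while a general-position argument exploiting the disjointness of $F_-^{(1)}$ and $F_-^{(2)}$ away from the wedge vertex makes the off-diagonal mod-$2$ linking numbers vanish. Thus $h|_{X(a,b)}$ is an odd almost embedding into the complement of $h(\Sigma^{2l+1}_1 \vee \Sigma^{2l+1}_2)$, and an ambient isotopy of $S^{3l+2}$ carries the unknotted wedge $h(\Sigma^{2l+1}_1 \vee \Sigma^{2l+1}_2)$ to the standard $S^{2l+1}_1 \vee S^{2l+1}_2$. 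Theorem \ref{t:ss1ae} is the crucial input at this step, guaranteeing that odd-almost-embedded configurations with the required odd linking are realizable, thus sidestepping the exact $\pm 1$ linking claim of the gap in \cite[Lemma 30]{FWZ}. Lemma \ref{t:irwstr} then upgrades the resulting odd almost embedding to an odd PL embedding, closing the cycle. The most delicate step I expect is controlling the ambient isotopy so that the wedge structure of $h(\Sigma^{2l+1}_1 \vee \Sigma^{2l+1}_2)$ is carried onto the standard wedge; this is plausible in the given codimension $(3l+2)-(2l+1) = l+1 \ge 2$ but must be verified explicitly.
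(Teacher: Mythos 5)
Your treatment of the ``easy'' implications matches the paper's intent: $(\mathrm{SKEW1})\Leftrightarrow(Ex2)$ via the strong deformation retraction onto $\Sigma^l\vee\Sigma^l$ together with Propositions \ref{p:ld} and \ref{p:cylnew}, $(Ex2)\Rightarrow(Ex'2)$ via the Zeeman--Irwin Lemma \ref{t:irwstr}, and $(Ex'2)\Rightarrow(Em2)$ by attaching Ramsey-type complexes; the paper dismisses exactly these as ``proved analogously to the corresponding correct implications of Conjecture \ref{t:ckmvwz}''. But note that the statement is labelled a \emph{Conjecture} in the paper precisely because $(Em2)\Rightarrow(Ex'2)$ is open, and that is where your argument has a genuine gap. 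Your claim that ``a general-position argument exploiting the disjointness of $F_-^{(1)}$ and $F_-^{(2)}$ away from the wedge vertex makes the off-diagonal mod-2 linking numbers vanish'' is unfounded: the linking coefficient of $h(\Sigma^l_1)$ with $h(\Sigma^{2l+1}_2)$ is an invariant of the given embedding $h$ and cannot be altered by general position, and disjointness of subcomplexes gives no control of it --- indeed the entire content of Lemma \ref{l:ramsey} is that disjoint subcomplexes of an embedded complex can be \emph{forced} to link, while Example \ref{e:link} and Theorem \ref{t:ss1ae} show how flexible such coefficients are. Nothing in $G_2=F_-^{(1)}\vee F_-^{(2)}$ bounds the off-diagonal parities, so an embedding witnessing $(Em2)$ need not restrict to an odd almost embedding in the wedge sense (the Kronecker $\delta_{i,j}$ condition in the definition), and the implication does not follow.

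Relatedly, your ``crucial input'', Theorem \ref{t:ss1ae}, concerns the single complex $F_-$ with one pair $(\Sigma^{2l+1},\Sigma^l)$; what your argument (and the scheme of \cite{FWZ}) actually requires is a wedge analogue controlling the full $2\times2$ mod-2 linking matrix. The paper states explicitly that $(Em2)\Rightarrow(Ex'2)$ ``would follow by a `wedge' analogue of Theorem \ref{t:ss1ae} (and of the conjecture in \cite[Remark 1.7.b]{KS20})'', and no such analogue is available --- this is exactly why the even-$l$ case is stated as Proposition \ref{t:ckmvwzp} while the odd-$l$ case remains a conjecture; a blind ``complete proof'' of it should therefore have raised suspicion. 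Two secondary weak points: the ambient isotopy carrying $h(\Sigma^{2l+1}_1\vee\Sigma^{2l+1}_2)$ to the standard wedge needs an unknotting theorem for \emph{wedges} of spheres, not just for single spheres (you flag this but do not supply it); and your sketch of $(Ex'2)\Rightarrow(Em2)$ does not typecheck as written, since the gluing along $S^l\vee S^l$ forces $\Sigma^l_j\subset F_-^{(j)}$ to coincide with the given $f(S^l_j)$, which lies nowhere near your small regular neighborhood of $S^{2l+1}_j$ and has an arbitrary odd linking coefficient $2z_j+1$, so already there one needs realizability of prescribed odd linking rather than a parallel-copy construction.
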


All the implications except $(Em2)\Rightarrow(Ex'2)$ (and their analogues for `almost embedding' replaced by
`embedding') are proved analogously to the corresponding correct implications of Conjecture \ref{t:ckmvwz}.
The implication $(Em2)\Rightarrow(Ex'2)$ (and its analogue) would follow by a `wedge' analogue of Theorem \ref{t:ss1ae}
(and of the conjecture in \cite[Remark 1.7.b]{KS20})
%\ref{t:ss1}.b
analogously to \cite{FWZ}.

}

\end{document}